\newcommand{\be}{\begin{equation}}
\newcommand{\ee}{\end{equation}}
\newcommand{\bea}{\begin{eqnarray}}
\newcommand{\eea}{\end{eqnarray}}
\newtheorem{thm}{Theorem}
\newtheorem{lemma}{Lemma}
\newtheorem{rem}{Remark}
\newcommand{\ve}{{\varepsilon}}
\newcommand{\rmd}{{\rm d}}
\newcommand{\bx}{{ {x} }}
\newcommand{\bu}{{{u}}}
\newcommand{\ol}[1]{\mkern 1.5mu\overline{\mkern-1.5mu#1\mkern-1.5mu}\mkern 1.5mu}
\newcommand{\Dlim}{{\mathcal{D}'}\mbox{-}\lim}
\title[Onsager's Singularity Theory]{An Onsager Singularity Theorem for \\  Leray Solutions of Incompressible Navier-Stokes}
\author{Theodore D. Drivas and Gregory L. Eyink}
\address{Department of Mathematics, Princeton University, Princeton, NJ 08544}
\email{tdrivas@math.princeton.edu}
\address{Department of Applied Mathematics and Statistics,  Johns Hopkins University, Baltimore, MD 21218}
\email{eyink@jhu.edu}
\date{today}
\begin{document}

\begin{abstract}
We study in the inviscid limit  the global energy dissipation of Leray solutions of incompressible 
Navier-Stokes on the torus ${\mathbb T}^d$, assuming that the solutions have norms for
Besov space $B^{\sigma,\infty}_3({\mathbb T}^d),$ $\sigma\in (0,1],$ that are bounded in the $L^3$-sense in time, 
uniformly in viscosity.  We establish an upper bound on energy dissipation of the form 
$O(\nu^{(3\sigma-1)/(\sigma+1)}),$ vanishing as $\nu\to0$  if $\sigma>1/3.$ A consequence 
is that Onsager-type ``quasi-singularities'' are required in the Leray solutions, even if the total energy 
dissipation vanishes in the limit $\nu\to 0$, as long as it does so sufficiently slowly.  We also give two sufficient 
conditions which guarantee  the existence of limiting weak Euler solutions $u$ which satisfy a local energy 
balance with possible anomalous dissipation due to inertial-range energy cascade in the Leray solutions.
For $\sigma\in (1/3,1)$ the anomalous dissipation vanishes and the weak Euler solutions may be 
spatially ``rough'' but conserve energy.   
\end{abstract}

\maketitle

\section{Introduction}

In a 1949 paper on turbulence in incompressible fluids \cite{O49}, L. Onsager announced a result that spatial H\"older exponents 
$\leq 1/3$ are required of the velocity field for anomalous turbulent dissipation (that is, energy dissipation non-vanishing 
in the limit of zero viscosity). Onsager's original statement and most subsequent work 
\cite{GLE94,CET94,DR00,CCFS08,LS09,LS10,LS12,I16,BLSV17} have involved the conjecture 
that the velocity field in the limit of infinite Reynolds number is a weak (distributional) solution of the incompressible Euler 
equations. In this short paper we show that the arguments employed to prove Onsager's claim about weak Euler solutions 
apply as well to Leray's solutions of the incompressible Navier-Stokes equation and can be used to prove a theorem that 
``quasi-singularities'' are required in those solutions in order to account for anomalous energy dissipation. 
In fact, such {consequences} follow even if the energy dissipation is vanishing in the limit of zero viscosity, 
{as long as it goes to zero as slowly as $\sim \nu^\alpha$ for some $\alpha\in (0,1)$.} 
{In that case, we show that the Navier-Stokes solutions cannot have Besov norms, above a critical smoothness 
$\frac{1+\alpha}{3-\alpha},$ which are bounded uniformly in viscosity.} 
{This observation is important because {empirical studies (e.g. see Remark 4 below) cannot distinguish in principle}
between a dissipation rate which is 
independent of viscosity and one which is vanishing sufficiently slowly. Our results thus considerably strengthen the conclusion 
that quasi-singularities are necessary to account for the enhanced energy dissipation rates observed in turbulent flow.} 
No assumption need be made in our proof about existence of limiting Euler solutions, but weak Euler solutions do arise 
as $\nu\to 0$ limits of the Leray solutions if some further natural conditions are satisfied. 

Let $\bu^\nu\in  L^\infty([0,T];L^2({\mathbb T}^d))\cap L^2([0,T]; H^1({\mathbb T}^d))$ for $\nu>0$ be Leray solutions 
of the incompressible Navier-Stokes equations 
satisfying 
\bea\label{NSE}
\partial_t \bu^\nu + \nabla \cdot (\bu^\nu\otimes \bu^\nu) \!\! &=& \!\! -\nabla p^\nu + \nu \Delta \bu^\nu +f^\nu,\\
\nabla \cdot u^\nu \!\!&=& \!\!0,
\eea
in the sense of distributions on $\mathbb{T}^d\times [0,T],$ with solenoidal initial conditions 
$u^\nu|_{t=0}=u_0^\nu\in  L^2({\mathbb T}^d)$ and  solenoidal body forcing $f^\nu\in L^2([0,T]; L^2({\mathbb T}^d))$. 
A fundamental property of these solutions, first obtained by Leray \cite{L34}, is the global energy inequality, which states 
that viscous energy dissipation cannot exceed the loss of energy by the flow plus the energy input by external force. 
This property may be reformulated as a global balance of kinetic energy:
\bea\label{viscousDiss}
\int_0^T\int_{\mathbb{T}^d} \! \varepsilon[\bu^\nu] \  \rmd \bx \rmd t
= \frac{1}{2}\int_{\mathbb{T}^d} \! | \bu_0^\nu|^2\rmd \bx
-\frac{1}{2}\int_{\mathbb{T}^d} \! |\bu^\nu(\cdot,T)|^2\rmd \bx
+\int_0^T\int_{\mathbb{T}^d} \! \bu^\nu\cdot f^\nu\  \rmd \bx \rmd t,
\eea
for almost every $T\geq 0$, where the total energy dissipation rate is
\be
\varepsilon[\bu^\nu] := \nu |\nabla \bu^\nu|^2+ D[\bu^\nu]
\label{viscousDiss2} \ee
 with $D[\bu^\nu]$ a non-negative distribution (Radon measure) that represents dissipation due to possible Leray singularities. 
 See Duchon-Robert \cite{DR00} and the proof of our Lemma 1.  {Our main result is then:}
 
\begin{thm}\label{theorem1}
Let $\bu^\nu\in L^\infty([0,T];L^2({\mathbb T}^d))\cap L^2([0,T]; H^1({\mathbb T}^d))$ for $\nu>0$ 
be any Leray solutions of incompressible Navier-Stokes equations on $\mathbb{T}^d\times [0,T]$ with initial data $
\bu_0^\nu\in B_2^{\sigma,\infty}({\mathbb T}^d)$, and forcing $f^\nu\in L^2([0,T]; B_2^{\sigma,\infty}({\mathbb T}^d))$  for some $\sigma\in(0,1].$
  Suppose  that: 
\be\label{lowerBndDiss}
\int_0^T\int_{\mathbb{T}^d} \! \varepsilon[\bu^\nu] \  \rmd \bx \rmd t
\geq \nu^\alpha L(\nu), \quad \quad  \alpha\in [0,1)
\ee
where $L: \mathbb{R}^+\to \mathbb{R}^+$ is a function slowly-varying at $\nu=0$ in the sense of Kuramata 
\cite{BGT89}, i.e. so that 
$\lim_{\nu\to 0} L(\lambda\nu)/L(\nu)=1$ for any $\lambda>0.$
Then, for any $\epsilon>0$, the family $\{u^\nu\}_{\nu>0}$ of Leray solutions cannot have norms 
$\|u^\nu\|_{L^{3}([0,T];B_3^{\sigma_\alpha+\epsilon, \infty}({\mathbb T}^d))}$ with $\sigma_\alpha:= \frac{1+\alpha}{3-\alpha}\in[1/3,1)$
that are bounded uniformly in $\nu>0$.
\end{thm}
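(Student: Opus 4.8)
The plan is to argue by contraposition: I will establish an a priori \emph{upper} bound showing that if the family $\{u^\nu\}$ were bounded in $L^3([0,T];B_3^{\sigma,\infty}(\mathbb{T}^d))$ uniformly in $\nu$, then the total dissipation would obey $\int_0^T\int_{\mathbb{T}^d}\varepsilon[u^\nu]\,\rmd\bx\,\rmd t\leq C\,\nu^{(3\sigma-1)/(\sigma+1)}$, precisely the exponent advertised in the abstract. Applying this with $\sigma=\sigma_\alpha+\epsilon$ and comparing with the hypothesized lower bound $\nu^\alpha L(\nu)$ will then produce a contradiction as $\nu\to0$.

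To prove the upper bound I would fix a coarse-graining scale $\ell$ and use the decomposition $\varepsilon[u^\nu]=\nu|\nabla u^\nu|^2+D[u^\nu]$ from Lemma~1. The anomalous part $\int_0^T\int D[u^\nu]$ is controlled by the Duchon--Robert representation together with the standard Onsager flux estimate on third-order increments, contributing a term of order $\ell^{3\sigma-1}\|u^\nu\|_{L^3B_3^{\sigma,\infty}}^3$. For the genuinely viscous part $\nu\int_0^T\int|\nabla u^\nu|^2$ I would split $u^\nu$ into modes below and above wavenumber $1/\ell$. Using the torus embedding $B_3^{\sigma,\infty}(\mathbb{T}^d)\hookrightarrow B_2^{\sigma,\infty}(\mathbb{T}^d)$, the low-mode gradient is controlled by Besov regularity, giving $\nu\int_0^T\|\nabla P_{<1/\ell}u^\nu\|_{L^2}^2\lesssim \nu\,\ell^{2\sigma-2}$; the high-mode viscous dissipation is \emph{not} controlled by regularity alone, and here I would invoke the Navier--Stokes energy balance for the high modes, in which the viscous term is traded against the subscale flux plus the high-wavenumber projections of the initial data and forcing. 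Since $u_0^\nu,f^\nu\in B_2^{\sigma,\infty}$, those projections are of order $\ell^{2\sigma}$, which is subdominant to $\ell^{3\sigma-1}$ for $\sigma\leq1$.

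Collecting these estimates yields $\int_0^T\int_{\mathbb{T}^d}\varepsilon[u^\nu]\lesssim \ell^{3\sigma-1}+\nu\,\ell^{2\sigma-2}+\ell^{2\sigma}$, and optimizing in $\ell$—the two principal terms balance at $\ell\sim\nu^{1/(\sigma+1)}$—gives the bound $C\,\nu^{(3\sigma-1)/(\sigma+1)}$. Finally, setting $\sigma=\sigma_\alpha+\epsilon$ and noting that $s\mapsto(3s-1)/(s+1)$ is strictly increasing with $(3\sigma_\alpha-1)/(\sigma_\alpha+1)=\alpha$, the resulting exponent $\beta:=(3(\sigma_\alpha+\epsilon)-1)/((\sigma_\alpha+\epsilon)+1)$ satisfies $\beta>\alpha$. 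The lower-bound hypothesis then forces $\nu^\alpha L(\nu)\leq C\,\nu^\beta$, i.e. $L(\nu)\leq C\,\nu^{\beta-\alpha}$; but a function slowly varying at $\nu=0$ cannot decay like any positive power of $\nu$, so $\nu^{-(\beta-\alpha)/2}L(\nu)\to\infty$, which contradicts this inequality for small $\nu$. Hence no uniform bound on $\|u^\nu\|_{L^3([0,T];B_3^{\sigma_\alpha+\epsilon,\infty})}$ can hold.

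I expect the main obstacle to be the viscous term $\nu\int_0^T\int|\nabla u^\nu|^2$: unlike the cubic flux it is quadratic and genuinely sensitive to the unresolved small scales, so Besov regularity of $u^\nu$ does not suffice, and one must exploit the structure of the equation (the high-mode energy balance) to convert it into controllable flux and data/forcing terms. A secondary point requiring care is verifying that the forcing enters only through its small high-wavenumber part, so that it does not spoil the vanishing of the bound, together with the bookkeeping needed to close the slowly-varying comparison.
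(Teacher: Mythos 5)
Your proposal is correct and follows essentially the same route as the paper: an exact resolved-scale energy budget, obtained by subtracting the filtered (coarse-grained) energy balance from the global Leray/Duchon--Robert balance, trades the uncontrollable subscale viscous dissipation for the Constantin--E--Titi flux term plus initial-data and forcing cumulants, yielding $O(\ell^{3\sigma-1})+O(\nu\ell^{2\sigma-2})+O(\ell^{2\sigma})$, which is optimized at $\ell\sim\nu^{1/(\sigma+1)}$ and then contradicts the slowly-varying lower bound upon taking $\sigma=\sigma_\alpha+\epsilon$. The only difference --- sharp Fourier projections $P_{<1/\ell}$ in place of mollifiers --- is cosmetic, and your identification of the unresolved viscous term as the step requiring the equation rather than Besov regularity alone is precisely the mechanism of the paper's key lemma.
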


Theorem \ref{theorem1} follows easily from the following lemma:  


\begin{lemma}\label{lemma1}
Let $\{u^\nu\}_{\nu>0}$ be a family of Leray solutions  with $\sigma$, $
\bu_0^\nu$, and  $f^\nu$  as in Theorem \ref{theorem1}.
Assume that  $\bu^\nu\in L^{3}([0,T];B_3^{\sigma,\infty}({\mathbb T}^d))$ with all the above Besov norms
bounded, uniformly in viscosity.  Then, for a.e. $T\geq 0$, the energy dissipation is bounded for some $\nu$-independent constant $C$ by:
 \be\label{noAnom}
\int_0^T\int_{\mathbb{T}^d} \! \varepsilon[\bu^\nu] \  \rmd \bx \rmd t
\leq C \nu^{\frac{3\sigma-1}{\sigma+1}}.
\ee
 \end{lemma}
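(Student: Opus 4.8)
The plan is to compare the exact global energy balance \eqref{viscousDiss} of the Leray solution with the energy balance of its spatially coarse-grained version; the large $O(1)$ energy-drop terms cancel, leaving only the genuinely small inertial-flux and resolved-viscous contributions, which I then optimize against the viscosity.

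First I would fix a standard even mollifier $\varphi$ with $\int\varphi=1$, set $\varphi_\ell(x)=\ell^{-d}\varphi(x/\ell)$ and $u_\ell^\nu=\varphi_\ell*u^\nu$. Since $\nabla\cdot u_\ell^\nu=0$ and $u_\ell^\nu$ is smooth in space for $\ell>0$, filtering \eqref{NSE} and pairing with $u_\ell^\nu$ yields the \emph{exact} filtered balance (no defect at finite $\ell$)
\be
\nu\int_0^T\!\!\int_{\mathbb{T}^d}|\nabla u_\ell^\nu|^2 + \int_0^T\!\!\int_{\mathbb{T}^d}\Pi_\ell
= \tfrac12\|u_{0,\ell}^\nu\|_{L^2}^2 - \tfrac12\|u_\ell^\nu(T)\|_{L^2}^2 + \int_0^T\!\!\int_{\mathbb{T}^d} u_\ell^\nu\cdot f_\ell^\nu,
\ee
where $\Pi_\ell=-\nabla u_\ell^\nu:\tau_\ell$ and $\tau_\ell=(u^\nu\otimes u^\nu)_\ell-u_\ell^\nu\otimes u_\ell^\nu$ (the pressure and self-transport terms drop by solenoidality). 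Subtracting this from \eqref{viscousDiss} produces, for every $\ell>0$ and a.e.\ $T$, the identity
\be
\int_0^T\!\!\int_{\mathbb{T}^d}\varepsilon[u^\nu]
= \nu\int_0^T\!\!\int_{\mathbb{T}^d}|\nabla u_\ell^\nu|^2 + \int_0^T\!\!\int_{\mathbb{T}^d}\Pi_\ell + R_\ell,
\ee
in which $R_\ell$ collects the three filtering differences of initial energy, terminal energy, and forcing work. Letting $\ell\to0$ here recovers the Duchon--Robert measure $D[u^\nu]$ and the decomposition \eqref{viscousDiss2}, but I instead keep $\ell$ free.

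Next I would estimate the first two terms with the uniform Besov bounds, using $B_3^{\sigma,\infty}\hookrightarrow B_2^{\sigma,\infty}$ on $\mathbb{T}^d$. Writing $\nabla u_\ell^\nu(x)=\int\nabla\varphi_\ell(r)[u^\nu(x-r)-u^\nu(x)]\,dr$ and using $\|u^\nu(\cdot+r)-u^\nu\|_{L^2}\lesssim|r|^\sigma$ gives $\|\nabla u_\ell^\nu\|_{L^2}\lesssim\ell^{\sigma-1}$, so the resolved viscous term is $O(\nu\ell^{2\sigma-2})$ after a H\"older step in time; the Constantin--E--Titi bound $\|\Pi_\ell\|_{L^1}\lesssim\|\nabla u_\ell^\nu\|_{L^3}\|\tau_\ell\|_{L^{3/2}}\lesssim\ell^{3\sigma-1}\|u^\nu\|_{B_3^{\sigma,\infty}}^3$ makes the flux term $O(\ell^{3\sigma-1})$.

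The delicate point is $R_\ell$. A crude bound $\|u^\nu-u_\ell^\nu\|_{L^2}\|f^\nu\|_{L^2}\lesssim\ell^\sigma$ is only $O(\ell^\sigma)$, which would dominate $\ell^{3\sigma-1}$ for $\sigma>1/2$ and wreck the estimate; I would instead exploit the quadratic structure. In Fourier the energy and forcing-work differences read $\sum_k(1-\widehat\varphi(\ell k)^2)(\cdots)$, and a Littlewood--Paley split at $|k|\sim1/\ell$, with $1-\widehat\varphi(\ell k)^2\lesssim\min(1,(\ell|k|)^2)$ and Cauchy--Schwarz on each shell, gives the sharper $R_\ell=O(\ell^{2\sigma})$, which is subdominant since $2\sigma>3\sigma-1$ for $\sigma<1$. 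Collecting the three bounds,
\be
\int_0^T\!\!\int_{\mathbb{T}^d}\varepsilon[u^\nu]\ \lesssim\ \nu\,\ell^{2\sigma-2} + \ell^{3\sigma-1} + \ell^{2\sigma},
\ee
and choosing $\ell=\nu^{1/(\sigma+1)}$ balances the first two terms at $\nu^{(3\sigma-1)/(\sigma+1)}$ while the last is the higher power $\nu^{2\sigma/(\sigma+1)}$, giving \eqref{noAnom}. The main obstacle is precisely securing the sharp $O(\ell^{2\sigma})$ control of the filtering corrections (rather than the naive $O(\ell^\sigma)$) and justifying the filtered balance as an exact equality at a.e.\ $T$ using only the $L^3([0,T];B_3^{\sigma,\infty})$ regularity to control the terminal field $u^\nu(T)$.
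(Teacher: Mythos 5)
Your overall strategy coincides with the paper's: derive an exact coarse-grained energy balance, subtract it from the Leray/Duchon--Robert global balance \eqref{viscousDiss}, estimate the flux by the Constantin--E--Titi bound $O(\ell^{3\sigma-1})$ and the resolved dissipation by $O(\nu\ell^{2(\sigma-1)})$, show the filtering corrections are subdominant, and optimize with $\ell\sim\nu^{1/(\sigma+1)}$. Your Littlewood--Paley treatment of the initial-energy and forcing-work differences is a legitimate variant of the paper's real-space cumulant estimates: the integrated differences are exactly $\int\tau_\ell(g;h)\,\rmd\bx$, your Fourier sums are the same quantities, and both routes give $O(\ell^{2\sigma})$ from the $B_2^{\sigma,\infty}$ control of $u_0^\nu$ and $f^\nu$ that the hypotheses supply.

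The genuine gap is the terminal-energy filtering difference $\tfrac12\int_{\mathbb{T}^d}\bigl(|u^\nu(\cdot,T)|^2-|\ol{(u^\nu)}_\ell(\cdot,T)|^2\bigr)\rmd\bx$. Your proposed $O(\ell^{2\sigma})$ bound for it would require a uniform $B_2^{\sigma,\infty}$ bound on the single time slice $u^\nu(\cdot,T)$, which the hypothesis $u^\nu\in L^3([0,T];B_3^{\sigma,\infty})$ does not provide: an $L^3$-in-time Besov bound gives no control at any particular $T$. You flag this as ``the main obstacle'' but do not close it. The fix is not a sharper estimate but a sign observation: by Jensen's inequality $|\ol{v}_\ell|^2\le\ol{(|v|^2)}_\ell$ pointwise, so this term enters the balance with a favorable sign and can simply be dropped, turning the identity into the inequality that suffices for \eqref{noAnom}. (The same convexity is unavailable for the initial-data and forcing terms, which is precisely why those need the quantitative $O(\ell^{2\sigma})$ bound and why Theorem \ref{theorem1} assumes $u_0^\nu\in B_2^{\sigma,\infty}$ and $f^\nu\in L^2([0,T];B_2^{\sigma,\infty})$.) A secondary point you also defer: justifying that the filtered balance holds with the stated boundary terms at a.e. $T$ requires showing that $\ol{(u^\nu)}_\ell(x,\cdot)$ is absolutely continuous in time and that the Leray solution satisfies \eqref{viscousDiss} at a.e. $T$ (via the Duchon--Robert local balance and right-continuity of $u^\nu$ in $L^2$ at $t=0$); the paper devotes a substantial part of the proof of its Lemma \ref{lemma} to exactly these verifications.
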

 

To see that Theorem \ref{theorem1}  follows from Lemma \ref{lemma1}, note that if for any $\epsilon>0,$
$\bu^\nu \in L^{3}([0,T];B_3^{\sigma_\alpha+\epsilon, \infty}({\mathbb T}^d))$ with norms bounded uniformly in viscosity, then the inequality 
\eqref{noAnom} together with \eqref{lowerBndDiss} implies:
\be\label{contradictionIneq}
L(\nu)  \leq C \nu^{\epsilon\frac{(3-\alpha)^2}{4+\epsilon (3-a)}}.
\ee
Since $\alpha \in [0,1)$, the exponent in the power-law on the righthand side of \eqref{contradictionIneq} is positive.  
This obviously leads to a contradiction since $\lim_{\nu\to 0} \nu^{-p}L(\nu)=+\infty$ for $L$ slowly 
varying at $\nu=0$ and for any $p>0.$
\vspace{3mm}

{In the context of Lemma \ref{lemma1},} we note that that if $\sigma\in [1/3,1]$ then Theorem 6.1 of \cite{CCFS08} implies that $D[u^\nu]=0$ 
and energy dissipation arises entirely from viscosity.  {The proof of this fact for $\sigma>1/3$ and fixed $\nu>0$ follows easily  
by the Constantin-E-Titi commutator argument \cite{CET94} for weak solutions, after taking into account the Leray-Hopf regularity 
$L^2(0,T;H^1(\mathbb{T}^d))$.} {We conjecture that our Theorem 1 is optimal for space dimensions $d>2$ in the sense 
that, for some $\alpha\in [0,1),$ there should exist sequences of Leray solutions of Navier-Stokes $u^\nu$ for  
$\nu>0$ that are uniformly bounded in $L^{3}([0,T];B_3^{\sigma_\alpha-\epsilon, \infty}({\mathbb T}^d))$ with any $\epsilon>0$ and 
for which the lower bound \eqref{lowerBndDiss} on dissipation holds as an asymptotic equality for $\nu\to 0$. The case $d=2$
is different, because of the absence of vortex-stretching. This implies strong bounds on enstrophy for Leray solutions in $d=2,$ 
even with initial vorticity $\omega_0\in L^p$ only for $p<2,$ and an essential improvement of the energy dissipation bounds
in our Lemma 1 for $d=2$ \cite{Ches16}.}



\begin{rem}\label{Besov}
{\rm 
The main condition on uniform Besov regularity in Lemma \ref{lemma1} is physically natural.
{The Besov space $B_p^{\sigma,\infty}(\mathbb{T}^d)$ is made up of measurable functions $f:\mathbb{T}^d\to \mathbb{R}^d$ which are finite in the norm
\be
\| f\|_{B_p^{\sigma,\infty}(\mathbb{T}^d)}:= \| f\|_{L^p(\mathbb{T}^d)} 
+ \sup_{r\in(0,1]^d}\frac{\|f(\cdot+r) - f(\cdot)\|_{\mathbb{T}^d}}{|r|^\sigma}
\ee
for $p\geq 1$ and $\sigma\in (0,1)$. {See \cite{ST87}, section 3.5.} 
These spaces can be equivalently explained in a way more familiar to fluid dynamicists by using structure functions.}
The $p$th--order structure functions $S_p^\nu(r)$ of spatial velocity-increments 
$\delta u^\nu(r;x,t):=u^\nu(x+r,t)-u^\nu(x,t)$ may be defined as usual by $S_p^\nu(r,t):=\langle |\delta u^\nu(r,t)|^p\rangle,$
where $\langle\cdot\rangle$ denotes space average over $x\in {\mathbb T}^d.$ The velocity field belongs to the Besov
space $B_p^{\sigma,\infty}({\mathbb T}^d)$ for $p\geq 1,$ $\sigma\in (0,1)$ at time $t$ if and only if 
\be
\langle |u^\nu(\cdot,t)|^p\rangle<C_0(t), \qquad S_p^\nu(r,t) \leq C_1(t)\left|\frac{r}{\ell_0}\right|^{\zeta_p}, \ \forall |r|\leq \ell_0 
\label{space-struc-fun} \ee 
with $\zeta_p=\sigma p$ and then the optimal constants $C_0(t),$ $C_1(t)>0$ in these upper bounds define a norm for the Besov space $B_p^{\sigma,\infty}({\mathbb T}^d)$ by the 
identification $\|u^\nu(\cdot,t)\|_{B_p^{\sigma,\infty}({\mathbb T}^d)}:=[C_0(t)+C_1(t)]^{1/p}.$ E.g. see \cite{GLE95}.
Here any choice of length-scale $\ell_0>0$ defines the same function space $B_p^{\sigma,\infty}({\mathbb T}^d)$
but for a physical identification of the constant $C_1(t)$ as the ``amplitude'' of an inertial-range scaling law,
one must take $\ell_0$ to be the integral-length of the turbulent flow and independent of $\nu>0.$
The uniform boundedness of the family $\{u^\nu\}_{\nu>0}$ in $L^{p}([0,T];B_p^{\sigma,\infty}({\mathbb T}^d))$ 
is equivalent to the condition that coefficients $C_0(t),$ $C_1(t)$ independent of $\nu>0$ should exist so that the 
bounds (\ref{space-struc-fun}) are satisfied for a.e. $t\in [0,T]$ and $\int_0^T dt\ [C_0(t)+C_1(t)]<\infty.$
The Theorem \ref{theorem1} and Lemma \ref{lemma1} apply {\it a fortiori} to solution spaces 
$L^p([0,T],B^{\sigma,\infty}_p({\mathbb T}^d))$ with 
any $p\geq 3$ and not only to $p=3.$ As a consequence, energy dissipation vanishing with $\nu\to 0$ as slowly as 
(\ref{lowerBndDiss}) (or possibly not vanishing at all for $\alpha=0$), implies 
$\zeta_p\leq \left(\frac{1+\alpha}{3-\alpha}\right)p\,$ for $p\geq 3$
as a constraint on possible structure-function scaling exponents in the inertial-range of any turbulent flow
with enhanced dissipation of the form (\ref{lowerBndDiss}). 
This inequality is a precise statement on ``quasi-singularities'' in the sequence of Leray solutions, in order to be consistent 
with the observed slow decrease of energy dissipation as $\nu\to 0.$  The Navier-Stokes solutions (barring possible true, Leray-type singularities) 
are spatially $C^\infty$ for any $\nu>0,$ but they cannot possess smoothness of the form (\ref{space-struc-fun}) that is uniform in viscosity.  {The primary physical motivation of our result is turbulence in space dimensions $d>2$,  where a forward energy 
cascade is expected.  However our theorem has some implications even for $d=2$.  
For example, reference \cite{Ches16} considers Navier-Stokes solutions with initial vorticity $\omega_0\in L^p(\mathbb{T}^2)$, $p\in (1,2]$
and obtains an upper bound on energy dissipation of the form $(\rm {const}.)\nu^{\alpha_p}$ for $\alpha_p:= \frac{2(p-1)}{p}\in (0,1],$
vanishing as $\nu\to 0$.  
If this is the actual scaling of the dissipation for $p<3/2$, the Onsager critical value of $p$ for $d=2,$ then our Theorem  \ref{theorem1} 
implies that the family $\{u^\nu\}_{\nu>0}$ cannot be uniformly bounded in 
${L^{3}([0,T];B_3^{\sigma_{\alpha_p}+\epsilon, \infty}({\mathbb T}^2))}$ with 
$\sigma_{\alpha_p}:= \frac{3p-2}{p+1}\in (1/2,1)$.}

\begin{rem}\label{rem:Bc}
A small but useful technical improvement of Theorem \ref{theorem1} can be easily provided by sharpening the spaces considered. First, recall that energy conservation for weak solutions of the Euler equations holds provided that $u\in B_3^{1/3, c_0}(\mathbb{T}^d)$, a subspace of $B_3^{1/3, \infty}(\mathbb{T}^d)$ that can be defined as follow
\begin{equation}\label{def:Bc}
B_p^{\sigma, c_0}(\mathbb{T}^d)=\left\{f\in L^p(\mathbb{T}^d):~\lim_{|r|\to 0}\frac{\|f(\cdot+r) - f(\cdot)\|_{L^p(\mathbb{T}^d)}}{|r|^\sigma}=0\right\}.
\end{equation}
See \cite{CCFS08}.
Note that $B_p^{\sigma', \infty}(\mathbb{T}^d)\subset B_p^{\sigma, c_0}(\mathbb{T}^d)\subset B_p^{\sigma, \infty}(\mathbb{T}^d)$ for any $\sigma'>\sigma$. Define also
\be\label{def:Bct}
L^q(0, T; B_p^{\sigma, c_0}(\mathbb{T}^d))=\left\{f\in L^q(0, T; L^p(\mathbb{T}^d)):~\lim_{|r|\to 0}\frac{\|f(\cdot+r) - f(\cdot)\|_{L^q(0, T; L^p(\mathbb{T}^d)}}{|r|^\sigma}=0\right\}.
\ee
Theorem \ref{theorem1} then holds in a form in which one replaces all instances of $B_p^{\sigma,\infty}$ with $B_p^{\sigma,c_0}$ and the conclusion reads that the family $\{u^\nu\}_{\nu>0}$ of Leray solutions cannot have norms 
$\|u^\nu\|_{L^{3}([0,T];B_3^{\sigma_\alpha, c_0}({\mathbb T}^d))}$ with $\sigma_\alpha:= \frac{1+\alpha}{3-\alpha}\in[1/3,1)$.    Note that the spaces  $B_p^{\sigma, c_0}$  allow us to remove the ``$\epsilon$" appearing in the theorem statement. The proof is almost identical and therefore omitted.   We are grateful to the anonymous referee for this remark.
\end{rem}

We emphasize again that we do not need to assume that any ``singular'' or ``rough'' Euler solutions exist in order to draw these 
conclusions. However, under reasonable additional conditions, weak Euler solutions will exist as inviscid limits of the Leray solutions.
For example:
 }
 \end{rem} 
 
 \begin{thm}\label{theorem2}
Let $\bu^\nu\in L^\infty([0,T];L^2({\mathbb T}^d))\cap L^2([0,T]; H^1({\mathbb T}^d))$ be 
any Leray solutions of incompressible Navier-Stokes equations with $\nu>0$ on $\mathbb{T}^d\times [0,T],$ 
for initial data $
\bu_0^\nu\in L^2({\mathbb T}^d)$ and forcing $f^\nu\in L^2([0,T]; L^2({\mathbb T}^d)),$  and assume either: 
\begin{quotation}
(i) For some $\sigma\in(0,1]$ the family 
$\{u^\nu\}_{\nu>0}$ is uniformly bounded in $L^{3}([0,T];B_3^{\sigma,\infty}({\mathbb T}^d)),$ { and that $f^{\nu}\to f$ strongly in $L^2([0,T];L^2({\mathbb T}^d))$ as $\nu\to 0^+$}. Let
 $u$ then be any strong limit of a subsequence $u^{\nu_k}\in L^3([0,T];L^3({\mathbb T}^d))$.
\end{quotation} 

\noindent or 

\begin{quotation}
(ii) \ $\bu^\nu\in L^{3}([0,T];L^3({\mathbb T}^d))$ with norms bounded uniformly in viscosity and furthermore, 
that  weak convergence as $\nu\to 0$ holds for a full-measure set of times: 
\be u^\nu(\cdot,t)\stackrel[L^3]{}{\rightharpoonup} u(\cdot,t), 
\quad 
(u^\nu \otimes u^\nu)(\cdot,t) \stackrel[L^{3/2}]{}{\rightharpoonup} (u\otimes u)(\cdot,t), \quad f^\nu(\cdot,t)\stackrel[L^2]{}{\rightharpoonup} f(\cdot,t)
\quad \mbox{ a.e. $t\in [0,T]$.} 
\label{wkconv} \ee 
\end{quotation} 

Then $u$ is a weak Euler solution  which also satisfies, in the sense of distributions, the energy balance 
\be \partial_t\left(\frac{1}{2}|u|^2\right)+\nabla\cdot\left[\left(\frac{1}{2}|u|^2+p\right)u\right] = -D[u] + u\cdot f \label{Ebal} \ee 
on ${\mathbb T}^d\times [0,T]$, with $D[u]$ the distributional limit of nonlinear ``energy flux" for the Leray solutions: 
\be D[u] :=\ \stackrel[\ell\to 0]{}{\Dlim}\ \stackrel[\nu\to 0]{}{\Dlim}\Pi_\ell[u^\nu].
\label{flux-anom} \ee 
See definition (\ref{Piell}) below.
 Furthermore, under the condition (i) 
\be D[u]= \ \stackrel[\nu\to 0]{}{\Dlim} \varepsilon[\bu^\nu] \label{flux-anom2}, \ee
where total dissipation measure $\varepsilon[\bu^\nu]$ for Leray solutions is defined in (\ref{viscousDiss2}), 
{and $u\in L^{3}([0,T];B_3^{\sigma-\epsilon, c_0}({\mathbb T}^d))$ for any $\epsilon>0.$ 
Thus, $D[u]=0$ and local energy conservation holds when $\sigma\in(1/3,1]$.} 
\end{thm}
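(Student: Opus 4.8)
The plan is to pass to the inviscid limit in \eqref{NSE} to obtain a weak Euler solution, and then to extract the local energy balance \eqref{Ebal} by a coarse-graining (Duchon--Robert) argument. First I would show that $u$ solves Euler distributionally. Testing \eqref{NSE} against a smooth solenoidal field, the viscous contribution is $\nu\int u^\nu\cdot\Delta\varphi$, which vanishes as $\nu\to0$ since $u^\nu$ is bounded in $L^3([0,T];L^3)$. Under hypothesis (i), the strong convergence $u^{\nu_k}\to u$ in $L^3([0,T];L^3)$ gives $u^{\nu_k}\otimes u^{\nu_k}\to u\otimes u$ in $L^{3/2}([0,T];L^{3/2})$ by H\"older; under (ii) these weak convergences are assumed outright. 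In either case every term passes to the limit, and $u$ is a weak Euler solution with pressure recovered from $-\Delta p=\partial_i\partial_j(u_iu_j)$.

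Next I would derive \eqref{Ebal}. Since $u\in L^3([0,T];L^3)$ is a weak Euler solution, the Duchon--Robert identity applies and yields \eqref{Ebal} with $D[u]=\Dlim_{\ell\to0}\Pi_\ell[u]$. To match the double-limit definition \eqref{flux-anom}, the key step is $\Pi_\ell[u^\nu]\to\Pi_\ell[u]$ in $\mathcal{D}'$ as $\nu\to0$ at each fixed $\ell$. Here I would use that the coarse-grained flux $\Pi_\ell$ is built from the subscale stress $\tau_\ell(u^\nu)=(u^\nu\otimes u^\nu)_\ell-u^\nu_\ell\otimes u^\nu_\ell$ contracted against $\nabla u^\nu_\ell$. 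For fixed $\ell$, mollification upgrades the weak convergences of (ii) to smooth (locally uniform) convergence: $u^\nu_\ell\to u_\ell$ and $\nabla u^\nu_\ell\to\nabla u_\ell$ follow from $u^\nu\rightharpoonup u$ in $L^3$, while $(u^\nu\otimes u^\nu)_\ell\to(u\otimes u)_\ell$ follows from the separately assumed $u^\nu\otimes u^\nu\rightharpoonup u\otimes u$ in $L^{3/2}$. This is precisely why (ii) postulates convergence of the quadratic term, since $\tau_\ell$ is quadratic and would not be controlled by weak convergence of $u^\nu$ alone; under (i) the same conclusion is immediate from continuity of $\Pi_\ell$ as a cubic map into $L^1$. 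Letting $\ell\to0$ then gives $D[u]=\Dlim_{\ell\to0}\Dlim_{\nu\to0}\Pi_\ell[u^\nu]$.

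For the identification \eqref{flux-anom2} under (i), I would instead pass directly to the limit in the Duchon--Robert local balance for $u^\nu$,
\[
\partial_t\tfrac{1}{2}|u^\nu|^2+\nabla\cdot\big[\big(\tfrac{1}{2}|u^\nu|^2+p^\nu\big)u^\nu\big]-\nu\Delta\tfrac{1}{2}|u^\nu|^2=-\varepsilon[u^\nu]+u^\nu\cdot f^\nu.
\]
Strong $L^3$ convergence makes $\tfrac{1}{2}|u^\nu|^2\to\tfrac{1}{2}|u|^2$ in $L^{3/2}$, $p^\nu\to p$ in $L^{3/2}$ by Calder\'on--Zygmund, and $u^\nu\cdot f^\nu\to u\cdot f$ in $L^1$ using $f^\nu\to f$ strongly in $L^2$, while $\nu\Delta\tfrac{1}{2}|u^\nu|^2\to0$. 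Hence every term except $\varepsilon[u^\nu]$ converges, forcing $\varepsilon[u^\nu]$ to converge in $\mathcal{D}'$; comparison with \eqref{Ebal} then gives $D[u]=\Dlim_{\nu\to0}\varepsilon[u^\nu]$. The Besov regularity of the limit follows from lower semicontinuity of the increment seminorm under strong $L^3([0,T];L^3)$ convergence, yielding $u\in L^3([0,T];B_3^{\sigma,\infty})$, together with the embedding $B_3^{\sigma,\infty}\subset B_3^{\sigma-\epsilon,c_0}$ recorded in Remark \ref{rem:Bc}. Finally, for $\sigma>1/3$ I would choose $\epsilon$ with $\sigma-\epsilon>1/3$ and invoke the standard Duchon--Robert bound $\|\Pi_\ell[u](\cdot,t)\|_{L^1}\lesssim\ell^{3(\sigma-\epsilon)-1}$, which is integrable in time and tends to $0$, so that $D[u]=0$ and \eqref{Ebal} reduces to local energy conservation.

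The main obstacle I anticipate is the passage $\Pi_\ell[u^\nu]\to\Pi_\ell[u]$ in spacetime distributions under the weak hypothesis (ii): the fixed-$\ell$ mollified integrands converge only for a.e.\ $t$ and are dominated by $C_\ell\|u^\nu(\cdot,t)\|_{L^3}^3$, which is merely bounded, and not obviously equi-integrable, in $L^1_t$. Controlling this requires combining the a.e.-in-time convergence with the uniform $L^3([0,T];L^3)$ bound to preclude temporal concentration before interchanging the $\nu\to0$ and $\ell\to0$ limits; this delicate interchange, rather than any single algebraic identity, is the crux of the argument.
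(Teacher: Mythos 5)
Your treatment of the energy balance follows essentially the same route as the paper: fix $\ell$, pass $\nu\to 0$ in the mollified quantities using the pointwise-in-space convergence that weak convergence plus mollification provides, identify $\Dlim_{\nu\to0}\Pi_\ell[u^\nu]=\Pi_\ell[u]$, then send $\ell\to0$ via the Duchon--Robert/Constantin--E--Titi machinery; the derivation of \eqref{flux-anom2} by comparing the $\nu\to0$ limit of the local balance \eqref{DRloc} with \eqref{Ebal}, and the persistence of Besov regularity by lower semicontinuity of the increment seminorm under strong limits, also match the paper. The equi-integrability worry you flag at the end is legitimate but is not where the paper's effort goes: for the quadratic terms needed to show $u$ solves Euler, the spatial integrals are dominated by $C_\ell\|u^\nu(\cdot,t)\|_3^2$, which is uniformly bounded in $L^{3/2}([0,T])$ and hence uniformly integrable, so Vitali closes that case; for the cubic terms in the energy balance the paper simply asserts dominated convergence with bounds that are ``uniformly in $L^1$,'' i.e.\ it is no more careful on this point than you are.

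The genuine gap is in case (i): you take the strong convergence $u^{\nu_k}\to u$ in $L^3([0,T];L^3(\mathbb{T}^d))$ as a hypothesis and never establish that such subsequences exist. That existence is the principal content of part (i) (see Remark \ref{IsettCVrem}) and occupies most of the paper's proof: one applies the Aubin--Lions--Simon lemma with the chain $B_3^{\sigma,\infty}(\mathbb{T}^d)\hookrightarrow\hookrightarrow L^3(\mathbb{T}^d)\hookrightarrow B^{\sigma-2,\infty}_{3/2}(\mathbb{T}^d)$ (the first embedding compact by Kolmogorov--Riesz), after showing that the weak time derivative
\[
\frac{du^\nu}{dt}=-\mathbb{P}\nabla\cdot(u^\nu\otimes u^\nu)+\nu\Delta u^\nu+f^\nu
\]
is uniformly bounded in $L^{3/2}([0,T];B^{\sigma-2,\infty}_{3/2}(\mathbb{T}^d))$ via Calder\'on--Zygmund estimates and the assumed uniform $L^{3}([0,T];B^{\sigma,\infty}_{3})$ bound (together with a duality argument to make sense of the weak derivative against $B^{2-\sigma,1}_3$ test functions). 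Without this step the conclusion under (i) could be vacuous. Everything downstream of the compactness in your proposal is sound, but this piece must be supplied.
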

 

  \begin{rem}\label{IsettCVrem}
{\rm We owe the first condition of Theorem \ref{theorem2} to P. Isett \cite{Phil1}, reproduced here with permission.  In particular, 
he pointed out that uniform boundedness of a family of weak Navier-Stokes solutions $\{\bu^\nu\}_{\nu>0}$ in $L^{2}([0,T];B_2^{\sigma,\infty}(\mathbb{T}^d))$  guarantees 
strong pre-compactness in $L^{2}(\mathbb{T}^d\times [0,T])$ 
by the Aubin-Lions-Simon Lemma 
(see also \cite{PhilPrePrint}).  
Isett pointed out to us \cite{Phil} that the uniform boundedness assumed in Lemma \ref{lemma1} allows such an argument 
also for $p=3$. In the physical application this means that if energy dissipation is bounded below 
as in (\ref{lowerBndDiss}) but if also $\{u^\nu\}_{\nu>0}$ is uniformly bounded in 
$L^{3}([0,T];B_3^{\sigma_\alpha-\epsilon,\infty}({\mathbb T}^d))$ for any $\epsilon>0$, then a limit Euler solution $u$ 
will exist. {Moreover, the limit will possess some spatial Besov regularity with exponent $\sigma_\alpha-\epsilon$ 
but not {\it a priori} with a higher exponent $\sigma_\alpha+\epsilon$ for any $\epsilon>0.$ 
}

The second part of the theorem slightly generalizes recent results of Constantin \& Vicol \cite{CV17} for wall-bounded domains 
$\Omega$. There, it is proved that if $u^\nu\rightharpoonup u$ weakly in $L^2(\Omega)$ for a.e. $t$ and {if 
a second-order structure function ${\mathcal S}_2^\nu(r)$ defined as in our Remark 1 (but also time-averaged)
satisfies an inertial-range scaling bound like \eqref{space-struc-fun}}, 
then $u$ is a weak solution to the Euler equations (see Theorem 3.1 of \cite{CV17}). {Recently, the condition on weak-convergence at a.e. time $t$ was removed in \cite{DN18} in favor of assuming a structure function bound within a more precise ``inertial range".  Also, as pointed out in \cite{CV17}, Remark 3.4, this condition may be removed by assuming 
a bound on the {\it space-time structure function} defined by 
${\mathcal S}_p^\nu(r,s):=\langle\!\langle |\delta u^\nu(r,s)|^p\rangle\!\rangle,$ 
where $\delta u^\nu(r,s;x,t)=u^\nu(x+r,t+s)-u^\nu(x,t)$ are space-time increments  
and where $\langle\!\langle\cdot\rangle\!\rangle$ denotes the space-time average over $(x,t)\in \Omega\times [0,T].$}   
{Specifically, it is assumed in \cite{CV17} for $p=2$ that}
{
\be\label{spacetime}
 \langle\!\langle | u^\nu|^p\rangle\!\rangle \leq  {\mathcal C}_0 \qquad {\mathcal S}_p^\nu(r,s) \leq {\mathcal C}_1\left[\left|\frac{r}{\ell_0}\right|
+\left|\frac{s}{t_0}\right|\right]^{\zeta_p}, \ \  \ \forall\ \eta(\nu)\leq  |r|\leq \ell_0,  \ \, \tau(\nu)\leq s\leq t_0
\ee
with some $\zeta_p>0$, {$\nu$-independent}
constants ${\mathcal C}_0,$ ${\mathcal C}_1>0$, and any scales $\eta(\nu)$, $\tau(\nu)$ converging to  $0$ as $\nu\to 0$.  If the bound \eqref{spacetime} is assumed to hold for  $\eta(\nu)= \tau(\nu)\equiv 0$, then  \eqref{spacetime} 
is the uniform regularity statement $ \sup_{\nu>0}\|\bu^\nu\|_{B_2^{\sigma,\infty}(\Omega\times [0,T])}<\infty$ for some $\sigma\in (0,1)$ and compactness {in $L^{2}(\Omega\times [0,T])$ with the strong topology} is immediately implied}
{ by the Kolmogorov--Riesz theorem \cite{HH10}.}  
Thus, subsequences $\nu_k\to 0$ always exist for which $u^{\nu_k}\to u$ strongly 
in $L^2$ and the limit function $u$ is automatically a weak Euler solution. 
We could likewise replace the condition (ii) {at each time slice in Theorem \ref{theorem2}} 
by the assumption that {\eqref{spacetime} holds for $p=3$, i.e. uniform third-order space-time structure function bounds in the inertial range, 
{and take $u$ to be any weak limit point of $u^\nu\in L^3(0,T;L^3(\mathbb{T}^d))$.}
 Furthermore, the limiting Euler solution inherits the space-time regularity $u\in B_3^{\sigma,\infty}(\Omega\times [0,T])$ by an argument similar to that in Remark \ref{limitBesov}.}

An earlier theorem giving conditions for convergence of Navier-Stokes solutions to weak Euler solutions 
satisfying a global energy inequality is proved in the work of Chen \& Glimm \cite{CG12}.  Their sufficient 
conditions involve the time-average energy spectrum, or $p=2,$ because all terms of the energy balance that 
are cubic in the velocity vanish when integrated over space.}

\end{rem}

 \begin{rem}\label{remEvid}
 {\rm It is worthwhile to review briefly here the empirical evidence regarding the global energy dissipation 
 rate in {boundary-free} turbulent flow. Numerical simulations of Fourier-truncated Navier-Stokes dynamics by pseudo-spectral 
 method in a periodic box correspond mostly closely to the conditions of our Theorem \ref{theorem1}. Free-decay 
 simulations with body-force $f^\nu=0$ such as \cite{BO95,TBS02} do show a non-vanishing energy flux 
 in the inertial-range, consistent  with $D[u]>0$ as defined in (\ref{flux-anom}), but there seems to have been no systematic 
 study of the dependence of space-average $\langle \ve^\nu(t) \rangle$ upon $\nu=1/Re$ in such simulations.
 Forced simulations with very smooth (large-scale) forces $f^\nu$ \cite{KRS98,KIYIU03} provide the best evidence 
 for a space-time average $\langle \ve^\nu\rangle$ which is nearly independent of $\nu=1/Re$ as $Re\to\infty.$  
 These simulations are nominally ``long-time steady-states'' with $T\to\infty,$ but in practice the time-averages are performed 
 only over several large-eddy turnover times, so that our Theorem \ref{theorem1} applies.  Given the data 
 plotted in Fig.~1 of \cite{KRS98} or Fig.~3 of \cite{KIYIU03} a reasonable inference is that the dissipation rate does not vanish
 as $Re\to\infty$, or vanishes only weakly with viscosity.  Accepting this as an empirical fact, our Theorem \ref{theorem1}
for $p=\infty$ implies that Onsager's prediction of H\"older exponents $h\leq 1/3$ \cite{O49} remains valid as a statement 
about ``quasi-singularities'' of Leray solutions. If any of the reasonable conditions in the Theorem 
\ref{theorem2} hold as well, then Onsager's conjecture on {weak} Euler solutions remains true, even if 
the dissipation rate is vanishing weakly as $\nu\to 0$. In the latter case the Euler solutions {may be}
spatially ``singular'' or ``rough'', but conserve energy. It should be emphasized that the Euler singularities inferred by this argument 
need not develop in finite time from smooth initial data. A standard practice in such numerical simulations is the 
initialization $u^\nu(\cdot,0)=u^{\nu'}(\cdot,T')$ of the simulation at high $Re$ by the final state at time $T'$ of a smaller 
Reynolds-number $Re'<Re$  simulation performed at lower resolution, interpolated onto the finer grid of the 
$Re$-simulation (e.g. see p.L21 of \cite{KIYIU03}). This practice of ``nested'' initialization means that initial conditions $u^\nu(\cdot,0)$
have Kolmogorov-type spectra over increasing ranges of scales as $\nu$ decreases and do not correspond 
to uniformly smooth initial data. 

Similar remarks apply to studies of dissipation rates {in boundary-free flows} by laboratory experiment. 
The most common experiments study turbulence produced downstream of wire-mesh grids in wind-tunnels 
or turbulent wakes generated by flows past other solid obstacles, such as plates, cylinders, etc. 
\cite{KRS84,PKW02,DLKA17}. 
These experiments measure the time-averaged kinetic energy $(1/2)\langle |u^\nu(x,\cdot)|^2\rangle$ at distances 
$x$ down-stream of the obstacle. If the data are reinterpreted by ``Taylor's hypothesis'' as space-averages 
$(1/2)\langle |u^\nu(\cdot,t)|^2\rangle$ at times $t=x/U,$ with $U$ the mean flow velocity, then these studies 
yield the space-average dissipation rate  $\langle \ve^\nu(t) \rangle$ by time-differentiation. The data plotted in 
\cite{KRS84,PKW02,DLKA17} again provide corroboratory evidence that $\langle \ve^\nu(t) \rangle$ is nearly independent 
of $\nu=1/Re$ as $Re$ increases. These experiments are obviously not in the space-periodic framework 
of our Theorem \ref{theorem1}. Ignoring the effects of walls in the wind-tunnel, at some distance from the 
turbulent wake, these flows might be regarded as contained in some large box with zero velocities at the wall
(and thus periodic). However, the creation of the turbulence by flow past solid obstacles implies that these 
experiments are closer to the setting of \cite{CV17}, with vorticity fed into the flow by viscous boundary layers 
that detach from the walls. Since the boundary layers become thinner as $\nu=1/Re$ decreases, the initial
data of these experiments also cannot be considered to be smooth uniformly in $\nu>0.$} \end{rem}

\begin{rem}
{ In light of the discussion in Remark \ref{remEvid}, theoretically incorporating the effects of solid confining walls  is of great practical 
importance. The experimental observations are rather different for wall-bounded turbulence, such as seen as in pipes, channels, closed 
containers, etc., than those reviewed above for boundary-free flows. Energy dissipation in confined turbulent flows with rough walls 
tends to constant values for $Re\gg 1,$ whereas energy dissipation in flows with smooth walls is generally observed to vanish with increasing 
$Re,$ yet much more slowly than the laminar rate $\sim 1/Re.$ For example, see the study \cite{CCDDT97} whose results are typical.   
Recently, there have been a number of papers proving Onsager-type theorems on necessary conditions for anomalous dissipation 
by weak solutions of the Euler equations on domains with solid boundaries \cite{BT18,DN18Ons,BTW18}.  The statements of energy dissipation 
are slightly more involved due to the fact that assumptions need to be made both in the interior and near the walls. The results of Drivas and 
Nguyen \cite{DN18Ons}, which focus on vanishing viscosity limits of Leray solutions, may be  modified to provide results in the same spirit 
of our Theorem \ref{theorem1}.  In particular, \S 2.4  of \cite{DN18Ons} provides a connection between the physical energy dissipation and 
coarse-grained fluxes as in Lemma \ref{lemma}. If one supposes that the energy dissipation  is lower bounded as in \eqref{lowerBndDiss} 
and introduces quantitative versions of the near-wall assumptions (i.e. impose how rapidly the velocity itself of the near-wall dissipation 
vanishes within a viscous boundary layer as viscosity tends to zero), then Theorem 2 and 3 of \cite{DN18Ons} can translated into constraints 
on uniform interior Besov regularity and boundary-layer behavior of Leray--Hopf solutions.  Detailed implications are left for future investigation.}
\end{rem}

The proof our Lemma \ref{lemma1} will be based on the same method employed by Constantin-E-Titi
\cite{CET94} to prove the original Onsager statement for weak Euler solutions, by means of a spatial mollification. 
Specifically, let $G$ be a \emph{standard mollifier}, with $G\in D(\mathbb{T}^d),$ $G\geq 0,$ and also
 $\int_{\mathbb{T}^d} G(r) \rmd r=1.$ Without loss of generality, we can assume that ${\rm supp}(G)$ is contained in the 
 Euclidean unit ball in $d$ dimensions.  Define the dilatation $G_\ell(r)=\ell^{-d} G(r/\ell)$ and space-reflection 
 $\check{G}(r)=G(-r)$. For any $v\in D'(\mathbb{T}^d),$ we define its \emph{coarse-graining at scale $\ell$} 
 by \be \ol{v}_\ell=\check{G}_\ell *v\in C^\infty(\mathbb{T}^d). \label{cg-def} \ee 
Then, we have the following:
\begin{lemma}\label{lemma}
Let initial data $\bu_0^\nu\in L^2(\mathbb{T}^d)$,  forcing $f^\nu\in L^2([0,T]; L^2(\mathbb{T}^d))$ and  $\bu^\nu$ be 
corresponding Leray solutions of the incompressible Navier-Stokes equations on $\mathbb{T}^d\times [0,T]$ for $\nu>0$.  
{Then, the following local resolved energy balance  holds {for any $\ell>0$}, for every $x\in {\mathbb T}^d$ and a.e. $t\in [0,T]$
\be\label{resolvedEnergyBal}
\partial_t\left( \frac{1}{2} |\overline{(\bu^\nu)}_{\ell}|^2 \right) +\nabla \cdot J_{\ell}^\nu =  -\Pi_{\ell}[\bu^\nu] -\nu |\nabla \ol{(\bu^\nu)}_{\ell}|^2 + \ol{(\bu^\nu)}_{\ell}\cdot \ol{(f^\nu)}_{\ell},\ee
with 
\be\label{J-def}  
\quad \quad  J_{\ell}^\nu:= \left(\frac{1}{2} |\overline{(\bu^\nu)}_{\ell}|^2+ \overline{(p^\nu)}_{\ell} \right)\overline{(\bu^\nu)}_{\ell}  + \overline{(\bu^\nu)}_{\ell}\cdot  {\tau}_{\ell} (\bu^\nu,\bu^\nu) -\nu\nabla \left(\frac{1}{2} |\overline{(\bu^\nu)}_{\ell}|^2\right) .
\ee
where the coarse-graining cumulant is defined by ${\tau}_{\ell} (g,h):=  \ol{(g\otimes h)}_\ell-\ol{g}_\ell\otimes \ol{h}_\ell\ $ for 
$g,h\in L^2(\mathbb{T}^d,\mathbb{R}^d),$ the trace is denoted by ${\tau}_{\ell} (g\,;h):={\rm Tr}\,\tau_\ell(g,h)$ and where
\be\label{Piell}
\Pi_{\ell}[\bu^\nu]  :=- \nabla\overline{(\bu^\nu)}_{\ell} : {\tau}_{\ell} (\bu^\nu,\bu^\nu).
\ee}
Furthermore, 
 for a.e. $T\geq 0$ and for any standard mollifier $G$ {and any $\ell>0$}, we have:
\bea \nonumber
\int_0^T\int_{\mathbb{T}^d} \! \varepsilon[\bu^\nu] \  \rmd \bx \rmd t
&=&  \int_0^{T}\!\int_{\mathbb{T}^d} \!  \Pi_{\ell}[\bu^\nu]\  \rmd \bx \rmd t+  \int_0^{T}\!\int_{\mathbb{T}^d} \! \nu  |\nabla \ol{(\bu^\nu)}_\ell|^2\  \rmd \bx \rmd t\\ \nonumber
&&+\    \frac{1}{2} \int_{\mathbb{T}^d} \! \tau_{\ell}(u_0^\nu;u_0^\nu)\ \rmd \bx -  \frac{1}{2} \int_{\mathbb{T}^d} \! \tau_{\ell}(u^\nu(\cdot,T); u^\nu(\cdot,T))\\
&&+\ \int_0^{T}\!\int_{\mathbb{T}^d} \!  \tau_{\ell}(\bu^\nu; f^\nu)\  \rmd \bx \rmd t \label{fourFifthsLaw}
\eea
\end{lemma}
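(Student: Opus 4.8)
The plan is to carry the Constantin--E--Titi/Duchon--Robert coarse-graining argument over to the viscous equation at a \emph{fixed} scale $\ell>0$, where the mollified field $\ol{(\bu^\nu)}_\ell=\check G_\ell*\bu^\nu$ is $C^\infty$ in space and every manipulation is classical, and only at the end to invoke the global Leray balance \eqref{viscousDiss} to read off the left-hand side of \eqref{fourFifthsLaw}. The two displays in the lemma are then, respectively, an exact pointwise energy balance for the coarse field and its space-time integral recast in terms of the physical energy via the trace of the coarse-graining cumulant.

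First I would convolve the momentum equation \eqref{NSE} with $\check G_\ell$. Since mollification commutes with $\pa_t,\nabla,\Delta$, this gives the exact identity
\[
\pa_t \ol{(\bu^\nu)}_\ell + \nabla\cdot\ol{(\bu^\nu\otimes\bu^\nu)}_\ell = -\nabla\ol{(p^\nu)}_\ell + \nu\Delta\ol{(\bu^\nu)}_\ell + \ol{(f^\nu)}_\ell,
\]
together with $\nabla\cdot\ol{(\bu^\nu)}_\ell=0$. Taking the dot product with $\ol{(\bu^\nu)}_\ell$, the key algebraic step is the cumulant decomposition $\ol{(\bu^\nu\otimes\bu^\nu)}_\ell=\ol{(\bu^\nu)}_\ell\otimes\ol{(\bu^\nu)}_\ell+\tau_\ell(\bu^\nu,\bu^\nu)$: using $\nabla\cdot\ol{(\bu^\nu)}_\ell=0$ one rewrites $\ol{(\bu^\nu)}_\ell\cdot\nabla\cdot(\ol{(\bu^\nu)}_\ell\otimes\ol{(\bu^\nu)}_\ell)=\nabla\cdot(\tfrac12|\ol{(\bu^\nu)}_\ell|^2\,\ol{(\bu^\nu)}_\ell)$ and $\ol{(\bu^\nu)}_\ell\cdot\nabla\cdot\tau_\ell(\bu^\nu,\bu^\nu)=\nabla\cdot(\ol{(\bu^\nu)}_\ell\cdot\tau_\ell)+\Pi_\ell[\bu^\nu]$, isolating exactly the cascade term of \eqref{Piell}. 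The pressure contributes the pure divergence $\nabla\cdot(\ol{(p^\nu)}_\ell\,\ol{(\bu^\nu)}_\ell)$, and the viscous term splits as $\nu\Delta(\tfrac12|\ol{(\bu^\nu)}_\ell|^2)-\nu|\nabla\ol{(\bu^\nu)}_\ell|^2$. Since the advective terms enter the balance with an overall minus sign, $\Pi_\ell[\bu^\nu]$ appears as $-\Pi_\ell[\bu^\nu]$; collecting all the exact divergences into the flux $J_\ell^\nu$ of \eqref{J-def} yields \eqref{resolvedEnergyBal}.

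For \eqref{fourFifthsLaw} I would integrate \eqref{resolvedEnergyBal} over $\mathbb{T}^d$, where $\int\nabla\cdot J_\ell^\nu\,\rmd\bx=0$ because the torus has no boundary, and then integrate in time over $[0,T]$. This leaves $\int_{\mathbb{T}^d}\tfrac12|\ol{(\bu^\nu)}_\ell|^2$ evaluated at the endpoints against $-\int_0^T\!\!\int_{\mathbb{T}^d}\Pi_\ell-\nu\int_0^T\!\!\int_{\mathbb{T}^d}|\nabla\ol{(\bu^\nu)}_\ell|^2+\int_0^T\!\!\int_{\mathbb{T}^d}\ol{(\bu^\nu)}_\ell\cdot\ol{(f^\nu)}_\ell$. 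Two trace identities then convert the coarse quantities back to physical ones: because mollification preserves spatial integrals on $\mathbb{T}^d$ (as $\int\check G_\ell=1$) and $\tau_\ell(g;h)=\ol{(g\cdot h)}_\ell-\ol g_\ell\cdot\ol h_\ell$, one has $\int_{\mathbb{T}^d}\tfrac12|\ol{(\bu^\nu)}_\ell|^2=\tfrac12\int_{\mathbb{T}^d}|\bu^\nu|^2-\tfrac12\int_{\mathbb{T}^d}\tau_\ell(\bu^\nu;\bu^\nu)$ and $\int_{\mathbb{T}^d}\ol{(\bu^\nu)}_\ell\cdot\ol{(f^\nu)}_\ell=\int_{\mathbb{T}^d}\bu^\nu\cdot f^\nu-\int_{\mathbb{T}^d}\tau_\ell(\bu^\nu;f^\nu)$. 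Substituting and rearranging gathers the energy drop and force-work into $\tfrac12\int|u_0^\nu|^2-\tfrac12\int|u^\nu(\cdot,T)|^2+\int_0^T\!\!\int\bu^\nu\cdot f^\nu$, which by the Leray/Duchon--Robert global balance \eqref{viscousDiss} equals $\int_0^T\!\!\int\varepsilon[\bu^\nu]$; this is \eqref{fourFifthsLaw}, and the bookkeeping of signs lands exactly on the stated $+\tfrac12\tau_\ell(u_0^\nu;u_0^\nu)-\tfrac12\tau_\ell(u^\nu(\cdot,T);u^\nu(\cdot,T))+\tau_\ell(\bu^\nu;f^\nu)$ boundary and forcing terms.

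The computation is, at each fixed $\ell>0$, classical because $\ol{(\bu^\nu)}_\ell$ is smooth in space, so the only genuinely delicate points are (a) the time-regularity needed to justify $\tfrac{d}{dt}\int\tfrac12|\ol{(\bu^\nu)}_\ell|^2=\int\ol{(\bu^\nu)}_\ell\cdot\pa_t\ol{(\bu^\nu)}_\ell$ and its integration on $[0,T]$, and (b) the identification of the left side of \eqref{fourFifthsLaw} with $\int\int\varepsilon[\bu^\nu]$. For (a) I would note that the mollified equation exhibits $\pa_t\ol{(\bu^\nu)}_\ell$ as an element of $L^1([0,T];C^\infty(\mathbb{T}^d))$ (using $\bu^\nu\otimes\bu^\nu\in L^\infty_tL^1_{\bx}$, the elliptic relation $-\Delta p^\nu=\nabla\cdot\nabla\cdot(\bu^\nu\otimes\bu^\nu)$, and $f^\nu\in L^2_{t,\bx}$), so $t\mapsto\ol{(\bu^\nu)}_\ell$ is absolutely continuous with values in every $H^s$ and the fundamental theorem of calculus applies; equivalently, one regularizes in time, runs the exact calculation, and removes the regularization. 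Point (b) is where the Leray rather than smooth structure enters and is the main conceptual obstacle: the energy identity holds only after inserting the non-negative defect $D[\bu^\nu]$ through $\varepsilon[\bu^\nu]=\nu|\nabla\bu^\nu|^2+D[\bu^\nu]$ as in \eqref{viscousDiss2}, and I simply invoke \eqref{viscousDiss}. I would add that \eqref{fourFifthsLaw} in fact renders \eqref{viscousDiss} self-consistent: letting $\ell\to0$, the cumulant boundary and forcing terms vanish, $\nu|\nabla\ol{(\bu^\nu)}_\ell|^2\to\nu|\nabla\bu^\nu|^2$ in $L^1$ by the $H^1$ regularity of Leray solutions, and $\Pi_\ell[\bu^\nu]$ converges in $\mathcal{D}'$ to the non-negative defect $D[\bu^\nu]$, recovering the claimed reformulation of Leray's inequality.
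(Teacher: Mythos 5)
Your proposal is correct and follows essentially the same route as the paper: mollify the equations, dot with the coarse-grained field $\ol{(\bu^\nu)}_\ell$, use the cumulant decomposition to isolate $\Pi_\ell[\bu^\nu]$, justify the classical time derivative of the smooth-in-space coarse field, integrate over the torus and in time, and convert back to physical quantities via the trace identities and the global balance \eqref{viscousDiss}. The only place you are lighter than the paper is that you invoke \eqref{viscousDiss} as given, whereas the paper derives it within this proof from the Duchon--Robert local balance \eqref{DRloc} by smearing with approximate indicator functions of $[0,T]$ and using the $L^2$ right-continuity of Leray solutions at a.e.\ time (which is exactly why the identity holds only for a.e.\ $T$); since the paper states \eqref{viscousDiss} up front with a pointer to this very proof, that is a presentational rather than a substantive difference.
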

\noindent The key ingredient of the proof of Lemma \ref{lemma1} is a simple exact formula derived in \cite{CET94} which 
expresses the ``energy flux'' $\Pi_{\ell}[\bu^\nu]$ in terms of velocity increments.  Our relation (\ref{flux-anom}) can thus
be interpreted as an extension of the celebrated Kolmogorov 4/5th--law to infinite Reynolds-number limits of Leray solutions. 


\section{Proofs}

\begin{proof}[Proof of Lemma \ref{lemma}]
Any {Leray} weak solution $u^\nu$ of Navier-Stokes satisfies point-wise in $x\in \mathbb{T}^d$
and distributionally in $t\in [0,T]$ the coarse-grained equations
\be\label{weak}
\partial_t \ol{(\bu^\nu)}_{\ell} + \nabla \cdot [\ol{(\bu^\nu \otimes \bu^\nu)}_{\ell}] = - \nabla \ol{(p^\nu)}_{\ell}  
+ \nu \Delta  \ol{(\bu^\nu)}_{\ell}+ \ol{(f^\nu)}_{\ell}. 
\ee
{We use here the velocity-pressure formulation of Leray solutions, with pressure
$p^\nu \in W^{-1,\infty} (0,T;L^2(\mathbb{T}^d))$ (e.g. see Theorem V.1.4 of \cite{BF13}). 
The $d$ equations \eqref{weak} can then be obtained by mollifying the Navier--Stokes equations with 
(non-solenoidal) test functions $\varphi_i,$ $i=1,2, \dots, d,$ of the form $\varphi_i(r,t):=\psi(t) G_\ell(r-x)e_i$ 
where $\psi\in C_0^\infty((0,T)),$ $G\in C^\infty(\mathbb{T}^d)$, and $e_i$ is the unit vector in the $i$th 
coordinate direction.} 

{We now show that the classical time derivative of  $\ol{(\bu^\nu)}_{\ell}(x,t)$ exists for every $x\in\mathbb{T}^d$ and a.e. $t\in [0,T]$.  See also Prop. 2 of \cite{D18}. }
Since Leray solutions satisfy $u^\nu \in L^\infty([0,T]; L^2(\mathbb{T}^d))$,  then for every $x\in \mathbb{T}^d$
\begin{eqnarray}
&& \|\nabla \cdot [\ol{(\bu^\nu \otimes \bu^\nu)}_{\ell}](x,\cdot)\|_{L^\infty([0,T])}
\leq \frac{1}{\ell}\|(\nabla G)_\ell\|_\infty \|u\|^2_{L^\infty([0,T]; L^2(\mathbb{T}^d))},\cr 
&& \|\nu \Delta  \ol{(\bu^\nu)}_{\ell}(x,\cdot)\|_{L^\infty([0,T])}
\leq \frac{\nu}{\ell^2}\|(\Delta G)_\ell\|_2 \|u\|_{L^\infty([0,T]; L^2(\mathbb{T}^d))},
\label{point-est1} \end{eqnarray}
by Young's convolution inequality. The pressure-gradient term $\nabla \ol{(p^\nu)}_{\ell}(x,t)$ in (\ref{weak}) is determined using $\nabla\cdot f^\nu=0$ from the Poisson 
equation 
\be -\Delta \nabla \ol{(p^\nu)}_{\ell}(\cdot,t)= (\nabla\otimes\nabla\otimes\nabla):\ol{(\bu^\nu \otimes \bu^\nu)}_{\ell}(\cdot,t) \ee
and the righthand-side belongs to $C^\infty(\mathbb{T}^d)$ for a.e. time $t$ {and is} bounded above by a constant of the form 
$(1/\ell^3)\|((\nabla \otimes\nabla\otimes \nabla)G)_\ell\|_\infty \|u(\cdot,t)\|^2_{L^2(\mathbb{T}^d)}.$ 
The solution of the Poisson problem thus satisfies a similar estimate as (\ref{point-est1}), i.e. for some constant $C$
and every $x\in \mathbb{T}^d$: 
\be 
\|\nabla \ol{(p^\nu)}_{\ell} (x,\cdot)\|_{L^\infty([0,T])}
\leq \frac{C}{\ell^3}
\|((\nabla \otimes\nabla\otimes \nabla)G)_\ell\|_\infty \|u\|^2_{L^\infty([0,T]; L^2(\mathbb{T}^d))}.
\ee 
We thus see that, except for $\ol{(f^\nu)}_{\ell}(x,\cdot)$, every term in (\ref{weak}) for the distributional 
derivative $\partial_t \ol{(\bu^\nu)}_{\ell}(x,\cdot)$ belongs to $L^\infty([0,T]).$ Since we assume that 
$f^\nu\in L^2([0,T];L^2(\mathbb{T}^d))$, we have for every  $x\in \mathbb{T}^d$ at least:
\be \| \ol{(f^\nu)}_{\ell} (x,\cdot)\|_{L^2([0,T])} \leq \|G_\ell\|_2 \|f^\nu\|_{L^2([0,T];L^2(\mathbb{T}^d))}.\ee 
It follows from Eq. \eqref{weak} that $\partial_t \ol{(\bu^\nu)}_{\ell}(x,\cdot) \in L^2([0,T]),$  so that 
$\ol{(\bu^\nu)}_{\ell}(x,\cdot)$ for every $x\in\mathbb{T}^d$ is absolutely continuous in time 
and the classical time-derivative exists and is given by Eqn. (\ref{weak}) for a.e. $t\in [0,T].$

Taking the Euclidean inner product of (\ref{weak})  with $\overline{(\bu^\nu)}_{\ell}(x,\cdot)$ for each $x\in\mathbb{T}^d$
and writing $\ol{(\bu^\nu \otimes \bu^\nu)}_{\ell}$ $=\ol{(\bu^\nu)}_\ell \otimes \ol{(\bu^\nu)}_{\ell}+{\tau}_{\ell} (\bu^\nu,\bu^\nu)$
yields by the Leibniz product rule the ``resolved energy" balance:
\be
\partial_t\left( \frac{1}{2} |\overline{(\bu^\nu)}_{\ell}|^2 \right) +\nabla \cdot J_{\ell}^\nu =  -\Pi_{\ell}[\bu^\nu] -\nu |\nabla \ol{(\bu^\nu)}_{\ell}|^2 + \ol{(\bu^\nu)}_{\ell}\cdot \ol{(f^\nu)}_{\ell},\ee
with 
\be
\quad \quad  J_{\ell}^\nu:= \left(\frac{1}{2} |\overline{(\bu^\nu)}_{\ell}|^2+ \overline{(p^\nu)}_{\ell} \right)\overline{(\bu^\nu)}_{\ell}  + \overline{(\bu^\nu)}_{\ell}\cdot  {\tau}_{\ell} (\bu^\nu,\bu^\nu) -\nu\nabla \left(\frac{1}{2} |\overline{(\bu^\nu)}_{\ell}|^2\right) ,
\ee
which, again, holds for every $x\in {\mathbb T}^d$ and a.e. $t\in [0,T]$ (and thus distributionally in space-time as well). 
Since  $|\ol{(\bu^\nu)}_{\ell}|^2(x,\cdot)/2$ is absolutely continuous in time, upon integrating we have:
\bea\label{resolvedEnergyBalint}
&&\frac{1}{2} |\overline{(\bu^\nu)}_{\ell}(x,T)|^2 -\frac{1}{2} |\overline{(\bu_0^\nu)}_{\ell}(x)|^2   
=\int_0^T\Big[ -\nabla \cdot J_{\ell}^\nu   -\Pi_{\ell}[\bu^\nu] -\nu |\nabla \ol{(\bu^\nu)}_{\ell}|^2 +\ol{(\bu^\nu)}_{\ell}\cdot \ol{(f^\nu)}_{\ell}\Big](x,t)\ dt \cr
&& 
\eea
for every $T\geq 0$ and $x\in \mathbb{T}^d$.  Since Leray solutions satisfy $u^\nu\in  L^3([0,T]; L^3(\mathbb{T}^d))$ and, consequently, 
$p^\nu\in  L^{3/2}([0,T]; L^{3/2}(\mathbb{T}^d))$ (see e.g. Proposition 1 of \cite{DR00}), each term of the 
integrand inside the square brackets in \eqref{resolvedEnergyBalint} is easily checked 
by the definitions (\ref{J-def}),(\ref{Piell}) to belong to $L^1([0,T];L^1(\mathbb{T}^d))$. The Fubini theorem then gives 
that $\int_{\mathbb{T}^d}\int_0^T \nabla \cdot J_{\ell}^\nu \,\rmd t\,\rmd x=\int_0^T\int_{\mathbb{T}^d} \nabla \cdot J_{\ell}^\nu \,\rmd x\,\rmd t=0$
by space-periodicity, so that integrating (\ref{resolvedEnergyBalint}) over $\mathbb{T}^d,$ we obtain the global balance of 
resolved energy:
\bea\nonumber
&& \frac{1}{2} \int_{\mathbb{T}^d} \! | \ol{(\bu^\nu)}_\ell(x,T)|^2\rmd \bx - \frac{1}{2} \int_{\mathbb{T}^d} \!  
|\overline{(\bu_0)}_{\ell}(x)|^2\rmd \bx + \int_0^T\! \int_{\mathbb{T}^d} \! \Pi_{\ell}[\bu^\nu]\ \rmd \bx  \rmd t \\
&& \hspace{50pt} +\ \int_0^T\! \int_{\mathbb{T}^d} \! \nu |\nabla \ol{(\bu^\nu)}_{\ell}|^2 \ \rmd \bx  \rmd t
- \int_0^T\! \int_{\mathbb{T}^d} \! \ol{(\bu^\nu)}_{\ell} \cdot \ol{(f)}_{\ell}\ \rmd \bx  \rmd t =0.\label{coarseGained}
\eea

We now show that any Leray solution satisfies the global energy balance (\ref{viscousDiss}) for almost every $T\geq 0$.  
Duchon \& Robert \cite{DR00} prove a local version of \eqref{viscousDiss}, i.e. they show that Leray solutions satisfy 
\be\label{DRloc}
\partial_t \left(\frac{1}{2}|u^\nu|^2 \right)+ \nabla \cdot \left[\left(\frac{1}{2}|u^\nu|^2 +p^\nu\right)u^\nu 
- \nu \nabla \left(\frac{1}{2}|u^\nu|^2 \right)\right] = -\varepsilon[u^\nu] + \bu^\nu \cdot f
\ee
in the sense of distributions on space-time.  We smear \eqref{DRloc} with a test function of the form $\varphi^\epsilon(x,t)=
\psi^\epsilon(t)\chi_{\mathbb{T}^d}(x)$, where $\psi^\epsilon(t)$ approximates the characteristic function of the time-interval $[0,T]$ 
and $\chi_{\mathbb{T}^d}(x)$ is the characteristic function of the whole torus (the constant function 1). This yields: 
\be\label{rightCont}
-\int_0^\infty \psi^{\epsilon\prime}\left(\int_{\mathbb{T}^d} \frac{1}{2}|u^\nu|^2 dx\right)dt 
= -\int_0^\infty\psi^\epsilon\int_{\mathbb{T}^d}   \ve[u^\nu]dxdt  + \int_0^\infty \psi^\epsilon \int_{\mathbb{T}^d}  \bu^\nu \cdot f\  dxdt.
\ee
Recall that Leray solutions $u^\nu$ are right-continuous in time, strongly in $L^2(\mathbb{T}^d)$, for a.e. $t\geq 0$ and, in particular, 
at $t = 0,$ as a consequence of the energy inequality (see Remark 2 of \cite{H88}). To make use of this one-sided continuity, let 
$0\leq \psi^\epsilon(t)\leq 1$ be supported on the interval $[0,T+\epsilon]$ and equal to 1 on $[\epsilon,T]$.  The derivative 
${\psi^{\epsilon\prime}}(t)$ gives the difference of two bump functions, one supported on $[T,T+\epsilon]$ and the other 
supported on $[0,\epsilon]$.   Taking $\epsilon\to 0$ we obtain by the right-continuity that:
\be \label{timeconv}
-\int_0^\infty \psi^{\epsilon\prime}\left(\int_{\mathbb{T}^d} \frac{1}{2}|u^\nu|^2 dx\right)dt \to 
\int_{\mathbb{T}^d} \frac{1}{2}|u^\nu(x,T)|^2 dx - \int_{\mathbb{T}^d} \frac{1}{2}|u^\nu_0(x)|^2 dx, \qquad {\rm a.e.} \ \ \  T\geq 0. 
\ee
The assumption $f^\nu \in  L^2([0,T];L^2(\mathbb{T}^d))$, {\it a-priori} estimate 
$\bu^\nu\in L^\infty([0,T];L^2(\mathbb{T}^d)) \cap L^2([0,T]; H^1(\mathbb{T}^d))$ and the fact that $D[\bu^\nu]$ is a Radon measure
permit the dominated convergence theorem to be applied to guarantee that as $\epsilon\to 0$
\be
-\int_0^\infty\psi^\epsilon\int_{\mathbb{T}^d}   \ve[u^\nu]dxdt  + \int_0^\infty \psi^\epsilon \int_{\mathbb{T}^d}  \bu^\nu \cdot f\  dxdt 
\to -\int_0^T \int_{\mathbb{T}^d}   \ve[u^\nu]dxdt  + \int_0^T \int_{\mathbb{T}^d}  \bu^\nu \cdot f\  dxdt. \ee 
Thus, the global energy balance \eqref{viscousDiss} is proved.
 
Adding to \eqref{viscousDiss} the resolved energy balance (\ref{coarseGained}) gives, for almost every $T\geq 0$,
\bea \nonumber
\int_0^T\int_{\mathbb{T}^d} \! \ve[\bu^\nu] \  \rmd \bx \rmd t
&=&  \int_0^{T}\!\int_{\mathbb{T}^d} \!  \Pi_{\ell}[\bu^\nu]\  \rmd \bx \rmd t+ \int_{\mathbb{T}^d} \! \nu |\nabla \ol{(\bu^\nu)}_{\ell}|^2 \ \rmd \bx  \rmd t\\ \nonumber
&&-\   \frac{1}{2} \int_{\mathbb{T}^d} \! \left( | \bu^\nu(\cdot,T)|^2-|\overline{(\bu^\nu(\cdot,T))}_{\ell}|^2 \right) \rmd \bx+   \frac{1}{2} \int_{\mathbb{T}^d} \! \left( | \bu_0|^2-|\overline{(\bu_0)}_{\ell}|^2 \right) \rmd \bx\\ \nonumber
&& + \int_0^{T}\!\int_{\mathbb{T}^d} \!  (\bu^\nu\cdot f- \ol{(\bu^\nu)}_{\ell}\cdot \ol{(f)}_{\ell})\  \rmd \bx \rmd t.
\eea
Since, for integrable $g\in L^1(\mathbb{T}^d)$ one has $\int_{\mathbb{T}^d}  \ol{g}_\ell(x) dx 
=\int_{\mathbb{T}^d} g(x) dx$, we arrive at identity \eqref{fourFifthsLaw}.  
\end{proof}

\begin{proof}[Proof of Lemma \ref{lemma1}]

We first prove the upper bound on the total dissipation of Leray solutions.  By Lemma \ref{lemma}, the global energy dissipation is given by the formula 
\eqref{fourFifthsLaw}. Note that $|\ol{(u^\nu)}_\ell|^2\leq \ol{(|u^\nu|^2)}_\ell$ by convexity and thus the 
contribution from $\tau_{\ell}(u^\nu(\cdot,T); u^\nu(\cdot,T))\geq 0$  in \eqref{fourFifthsLaw} is non-positive and we may drop it at the expense of an inequality:
\bea \nonumber
\int_0^T\int_{\mathbb{T}^d} \! \varepsilon[\bu^\nu] \  \rmd \bx \rmd t
&\leq&  \int_0^{T}\!\int_{\mathbb{T}^d} \!  \Pi_{\ell}[\bu^\nu]\  \rmd \bx \rmd t+  \int_0^{T}\!\int_{\mathbb{T}^d} \! \nu  |\nabla \ol{(\bu^\nu)}_\ell|^2\  \rmd \bx \rmd t\\ 
&&+\    \frac{1}{2} \int_{\mathbb{T}^d} \! \tau_{\ell}(u_0^\nu; u_0^\nu)\ \rmd \bx +\ \int_0^{T}\!\int_{\mathbb{T}^d} \!  \tau_{\ell}(\bu^\nu; f^\nu)\  \rmd \bx \rmd t. \label{fourFifthsLaw1}
\eea
The inequality \eqref{fourFifthsLaw1} then implies:
\bea \label{fourFifthsLawbnd}
\int_0^T\int_{\mathbb{T}^d} \! \varepsilon[\bu^\nu] \  \rmd \bx \rmd t \ \leq 
 \int_0^{T}\!\! \| \Pi_{\ell}[\bu^\nu] \|_{1} \rmd t+  
 \int_0^{T}\!\! \! \nu\|\nabla \ol{(\bu^\nu)}_\ell \|_{2}^2 \ \rmd t + \frac{1}{2}\| \tau_{\ell}(u_0^\nu; u_0^\nu)\|_1 +  \int_0^{T}\!\! \|\tau_{\ell}(\bu^\nu; f^\nu)\|_1dt.\,\,\label{mainIneqFterm}
\eea
The energy flux-through-scale is bounded using the  Constantin--E--Titi commutator estimate \cite{CET94}:
\bea
 \int_0^T\!  \| \Pi_{\ell}[\bu^\nu(t)] \|_{1}   \rmd t  \leq C_G \ell^{3\sigma-1} \int_0^T \|\bu^\nu(t)\|_{B_3^{\sigma,\infty}(\mathbb{T}^d)}^3dt= O(\ell^{3\sigma-1}).\label{CETflux2}
\eea
{Above, $C_G$ is a constant depending on $G$ but not on $\ell$, $\nu$ and 
the ``big-$O$'' notation denotes an upper bound with a constant prefactor depending only upon $G$ and $u$.}  
Next, using the nesting property $L^p(\mathbb{T}^d)\subseteq L^q(\mathbb{T}^d)$, $p\geq q,$
we bound the resolved energy dissipation term
\bea
  \int_0^{T} \nu\|\nabla \ol{(\bu^\nu)}_\ell \|_{2}^2 \ \rmd t  \leq  \int_0^{T} \nu\|\nabla \ol{(\bu^\nu)}_\ell \|_{3}^2 \ \rmd t
  \le {C'_G} \nu \ell^{2(\sigma-1)} \int_0^T \|\bu^\nu(t)\|_{B_3^{\sigma,\infty}}^2dt = O(\nu \ell^{2(\sigma-1)}).\label{viscEst}
  \eea
The remaining terms in (\ref{fourFifthsLawbnd}) are bounded using estimates for coarse-graining cumulants (see, e.g. \cite{CET94,DE17}):
 \bea
\label{Otherest1}
\|  \tau_{\ell}(u_0^\nu; u_0^\nu)\|_1& \le & {C''_G} 
\ell^{2\sigma}  \sup_{\nu>0} \|\bu_0^\nu\|_{B_2^{\sigma,\infty}(\mathbb{T}^d)}^2 = O(\ell^{2\sigma}  ),\\  
   \int_0^{T}\|\tau_{\ell}(\bu^\nu;  f^\nu)\|_1dt 
    & \le &  {C''_G}\ell^{2\sigma} \sup_{\nu>0} \| f^\nu \|_{L^{2}([0,T];B_2^{\sigma,\infty}(\mathbb{T}^d))}   
    \sup_{\nu>0} \|\bu^\nu\|_{L^{3}([0,T];B_3^{\sigma,\infty}(\mathbb{T}^d))}  = O(\ell^{2\sigma}).\label{Otherest2}
 \eea  
Thus, combining the estimates (\ref{CETflux2}), (\ref{viscEst}), (\ref{Otherest1}) and (\ref{Otherest2}) in the inequality (\ref{mainIneqFterm}), we find that:
\bea 
\int_0^T\int_{\mathbb{T}^d} \! \varepsilon[\bu^\nu] \  \rmd \bx \rmd t=O(\ell^{3\sigma-1})+ O(\nu \ell^{2(\sigma-1)}).
\label{mainIneqFtermBND}
\eea
Here a term $O(\ell^{2\sigma})$ has been absorbed into $O(\ell^{3\sigma-1}),$ since for $\sigma\leq 1$ it is always smaller as $\ell\to 0.$
Because $\ell>0$ in (\ref{mainIneqFtermBND}) is arbitrary, we specify a relation between $\ell$ and $\nu$ which optimizes the upper bound 
by balancing the contribution of the non-linear flux with the resolved dissipation.  This fixes a relationship $\ell\sim\nu^{1/(\sigma+1)}$ 
and yields the final upper bound:
\bea \nonumber
\int_0^T\int_{\mathbb{T}^d} \! \varepsilon[\bu^\nu] \  \rmd \bx \rmd t  = O(\nu^{\frac{3\sigma-1}{\sigma+1}})
\label{mainIneqFtermBND2}
\eea
as claimed in \eqref{noAnom}. {It is worth remarking that $\ell\sim\nu^{1/(\sigma+1)}$ is the expected scaling in phenomenological 
theory for the ``dissipation length'' where nonlinear energy flux and viscous energy dissipation become comparable, when the velocity increments 
exhibit scaling $\delta u(\ell)\sim \ell^\sigma$. See \cite{PV87,F95}.}  
\end{proof}

\begin{proof}[Proof of Theorem \ref{theorem2}]

We now show under either condition (i) or (ii) that $u$ is a weak solution of the Euler equations which satisfies 
distributionally the local energy balance:
\be \label{limitBalance}
\partial_t\left(\frac{1}{2}|u|^2\right)+\nabla\cdot\left[\left(\frac{1}{2}|u|^2+p\right)u\right] 
= -D[u] + u\cdot f, \qquad D[u] := \stackrel[\ell\to 0]{}{\Dlim}\Pi_{\ell}[\bu]. \ee 
We prove these conclusions separately for condition (i) and for condition (ii):\\

\vspace{-2mm} 
\noindent \emph{Proof of Theorem 2(i)}: 
We apply the Aubin-Lions-Simon Lemma, stated as in Theorem II.5.16 of \cite{BF13}, with $p=3,$ $r=3/2,$
$B_0=B_3^{\sigma,\infty}(\mathbb{T}^d),$ $B_1=L^3(\mathbb{T}^d)$, and $B_2=B^{\sigma-2,\infty}_{3/2}(\mathbb{T}^d).$
The imbedding of $B_3^{\sigma,\infty}(\mathbb{T}^d)$ in $L^3(\mathbb{T}^d)$ is compact by the
Kolmogorov-Riesz theorem and $L^3(\mathbb{T}^d)=F^{0,2}_3(\mathbb{T}^d)${, a Triebel-Lizorkin space 
(see \cite{ST87}, section 3.5),} is continuously embedded in 
$B^{\sigma-2,\infty}_{3/2}(\mathbb{T}^d)$ (e.g. Remark 3.5.1.4, \cite{ST87}).

{
We now show that a distributional Navier-Stokes solution $u\in L^{3}([0,T];B^{\sigma,\infty}_{3}(\mathbb{T}^d))$ has a weak time-derivative 
in the sense of Definition II.5.7 of \cite{BF13}, which is given by 
\be
\label{disttimeder} \frac{du^\nu}{dt}=-{\mathbb P}\nabla\cdot(u^\nu \otimes u^\nu)+\nu\Delta u^\nu + f^\nu 
\in L^{3/2}([0,T];B^{\sigma-2,\infty}_{3/2}(\mathbb{T}^d)), 
\ee
with ${\mathbb P}$ the Leray projector.  To see this, choose smooth test functions of the form $\varphi(t,x)=\psi(t) \phi(x)$ with $\psi\in C_0^\infty((0,T))$ and $\phi\in C^\infty(\mathbb{T}^d,\mathbb{R}^d)$, giving 
\begin{align}
 \left\langle \int_0^T \partial_t \psi(t) u(t) dt, \phi\right\rangle  &=- \left\langle \int_0^T \psi(t) \Big[ -\mathbb{P}\nabla\cdot (u\otimes u)(t) + \nu \Delta u(t)+ f^\nu(t) \Big]dt ,  \phi\right\rangle, \label{weakderiv}
\end{align}
where $\langle \cdot, \cdot \rangle$ denotes the usual pairing between elements of $D'(\mathbb{T}^d)$ and 
$D(\mathbb{T}^d)=C^\infty(\mathbb{T}^d)$.  
We next observe that each term inside the square bracket on the righthand side of the previous equation belongs to 
$L^{3/2}([0,T];B^{\sigma-2,\infty}_{3/2}(\mathbb{T}^d))$ with norms uniformly bounded in $\nu$.} First, by the Calderon-Zygmund inequality we have for some constant $c_0$ {depending only on space dimension $d$} the estimate
\be\|{\mathbb P}\nabla\cdot(u^\nu \otimes u^\nu)\|_{L^{3/2}([0,T];B^{\sigma-2,\infty}_{3/2}(\mathbb{T}^d))}
\leq  c_0\|u^\nu \otimes u^\nu\|_{L^{3/2}([0,T];B^{\sigma-1,\infty}_{3/2}(\mathbb{T}^d))} 
\leq c_0\|u^\nu \|_{L^{3}([0,T];B^{\sigma,\infty}_{3}(\mathbb{T}^d))}^2 . 
\label{ABS1} \ee
On the other hand,
\be\|\Delta u^\nu\|_{L^{3/2}([0,T];B^{\sigma-2,\infty}_{3/2}(\mathbb{T}^d))}
\leq  c_1\|u^\nu\|_{L^{3/2}([0,T];B^{\sigma,\infty}_{3/2}(\mathbb{T}^d))}
\leq  c_1\|u^\nu\|_{L^{3}([0,T];B^{\sigma,\infty}_{3}(\mathbb{T}^d))}.
\label{ABS2} \ee
Finally, because the sequence $f^\nu$ is strongly convergent, it is uniformly bounded in $L^2([0,T];L^2(\mathbb{T}^d))$
and 
\be\| f^\nu\|_{L^{3/2}([0,T];B^{\sigma-2,\infty}_{3/2}(\mathbb{T}^d))}
\leq \|f^\nu\|_{L^{2}([0,T];L^{2}(\mathbb{T}^d))}. \label{ABS3} \ee
{These bounds imply that the element of $D'(\mathbb{T}^d)$ which is paired with $\phi$
on the right side of \eqref{weakderiv} in fact belongs to $B^{\sigma-2,\infty}_{3/2}(\mathbb{T}^d)$. 
Moreover, $ \int_0^T \partial_t \psi(t) u(t) \, dt\in B^{\sigma,\infty}_{3}(\mathbb{T}^d)$ 
on the left side of \eqref{weakderiv}. Since there is the Banach space duality 
$\left(B^{2-\sigma,1}_3(\mathbb{T}^d)\right)'=B^{\sigma-2,\infty}_{3/2}(\mathbb{T}^d)$ 
and $D(\mathbb{T}^d)$ is dense in $B^{2-\sigma,1}_3(\mathbb{T}^d)$
(\cite{ST87}, section 3.5.6), we can extend the relation \eqref{weakderiv} to $\phi\in B^{2-\sigma,1}_3(\mathbb{T}^d)$ 
by continuity and this implies the equality 
\begin{align}
 \int_0^T \partial_t \psi(t) u(t) dt  &=- \int_0^T \psi(t) \Big[ -\mathbb{P}\nabla\cdot (u\otimes u)(t) + \nu \Delta u(t)+ f^\nu(t) \Big]dt, 
\label{weakderiv2}
\end{align}
as elements of $B^{\sigma-2,\infty}_{3/2}(\mathbb{T}^d)$.  It follows 
that \eqref{disttimeder} holds in the sense of Definition II.5.7 of \cite{BF13}.}

By the estimates (\ref{ABS1})-(\ref{ABS3}), one has {furthermore} 
\bea  \left\|\frac{du^\nu}{dt}\right\|_{L^{3/2}([0,T];B^{\sigma-2,\infty}_{3/2}(\mathbb{T}^d))}
\!\!\! \!\!\! \!\!\!  && \leq c_0\|u^\nu \|_{L^{3}([0,T];B^{\sigma,\infty}_{3}(\mathbb{T}^d))}^2
+\nu c_1\|u^\nu\|_{L^{3}([0,T];B^{\sigma,\infty}_{3}(\mathbb{T}^d))} \cr
&& \hspace{80pt} + \|f^\nu\|_{L^{2}([0,T];L^{2}(\mathbb{T}^d))}.
\eea
{In view of our assumptions (i) in Theorem \ref{theorem2},} the family of weak time-derivatives
$\{du^\nu/dt\}_{\nu>0}$ is uniformly bounded in $L^{3/2}([0,T];B^{\sigma-2,\infty}_{3/2}(\mathbb{T}^d))$.
The conditions of the 
Aubin-Lions-Simon Lemma are therefore satisfied, so that $\{u^\nu\}_{\nu>0}$ is relatively compact in 
$L^3([0,T],L^3(\mathbb{T}^d)).$ Subsequences $\nu_k\to 0^+$ thus always exist so that 
$u^{\nu_k}\to u$ strongly in $L^{3}(\mathbb{T}^d\times [0,T])$.  
For any such subsequence, we can apply 
the arguments of \cite{DR00} to obtain the statements (\ref{Ebal}),(\ref{flux-anom}),(\ref{flux-anom2}).
\\
\noindent \emph{Proof of Theorem 2(ii)}: 
{
First we show any limit $u$ is a weak Euler solution.  Recall our assumptions \eqref{wkconv}: For $\nu\to 0$
\be u^\nu(\cdot,t)\stackrel[L^3]{}{\rightharpoonup} u(\cdot,t), 
\quad 
(u^\nu \otimes u^\nu)(\cdot,t) \stackrel[L^{3/2}]{}{\rightharpoonup} (u\otimes u)(\cdot,t), \quad f^\nu(\cdot,t)\stackrel[L^2]{}{\rightharpoonup} f(\cdot,t)
\quad   \mbox{ a.e. $t\in [0,T]$}.  \label{Convassumptions}
\ee 
These conditions imply that $\overline{(f^\nu)}_{\ell}\to \overline{(f)}_{\ell},$ $\overline{(\bu^\nu)}_{\ell}\to \overline{(\bu)}_{\ell}$ and $\overline{(\bu^\nu\otimes u^\nu)}_{\ell}\to \overline{(u\otimes u)}_{\ell}$
pointwise in space, a.e. $t$.  Integrating the coarse-grained Navier-Stokes equations \eqref{weak} against an arbitrary solenoidal test function $\varphi\in D([0,T]\times \mathbb{T}^d)$ yields:
\bea
-\langle\partial_t\varphi, \overline{(\bu^\nu)}_{\ell} \rangle  &=& \langle\nabla\varphi,  \ol{(\bu^\nu \otimes \bu^\nu)}_{\ell}\rangle+   \nu\langle\Delta\varphi,  \overline{(\bu^\nu)}_{\ell}\rangle+ \langle\varphi,  \ol{(f^\nu)}_{\ell}\rangle.
\label{resolvedweak}
\eea
 To show convergence {as $\nu\to 0$},  we obtain uniform bounds for all the 
 {integrands} in \eqref{resolvedweak} 
 and apply Lebesgue dominated convergence.
{Such bounds} are easily obtained by applying Young's inequality for convolutions:
\be\label{b1}
| \ol{(u^\nu)}_\ell(x,t)| \leq  \|G_\ell\|_{3/2} \|u^\nu(\cdot,t)\|_{3} \lesssim \|u^\nu(\cdot,t)\|_{3},
\ee
\be \label{b2}
|\ol{(\bu^\nu \otimes \bu^\nu)}_{\ell}(x,t)|\leq \|G_\ell\|_3 \|u^\nu\otimes u^\nu(\cdot,t)\|_{3/2}  \lesssim  \|u^\nu(\cdot,t)\|_{3}^2,
\ee
\be\label{b3}
| \ol{(f^\nu)}_\ell(x,t)| \leq  \| G_\ell\|_{2} \|f^\nu(\cdot,t)\|_{2} \lesssim \|f^\nu(\cdot,t)\|_{2},
\ee
{where the notation $\lesssim$ indicates an upper bound with constant prefactor depending 
on $G$ and $\ell$, but not on $\nu$.}
By our assumption $u^\nu\in  L^3([0,T]; L^3(\mathbb{T}^d))$ and $f^\nu\in  L^2([0,T]; L^2(\mathbb{T}^d))$ with norms uniformly bounded,  all of the upper bounds \eqref{b1}--\eqref{b3} are in $L^1(\mathbb{T}^d\times [0,T])$ uniformly in $\nu>0$. 
Note that the term in \eqref{resolvedweak} with viscosity as a pre-factor vanishes as $\nu\to 0$
\bea
\nu\langle\Delta\varphi,  \overline{(\bu^\nu)}_{\ell}\rangle \leq \nu  \| {\Delta} \varphi \|_2  \|u^\nu\|_{L^\infty([0,T];L^2(\mathbb{T}^d))}&\stackrel[\nu\to 0]{}{\longrightarrow} & 0.
\eea
We may therefore apply dominated convergence to obtain
from (\ref{resolvedweak})  for fixed $\ell> 0$ that in the limit $\nu\to 0$
\bea\nonumber
-\langle\partial_t\varphi, \overline{\bu}_{\ell} \rangle  &=& \langle\nabla\varphi,  \ol{(\bu \otimes \bu)}_{\ell}\rangle+ \langle\varphi,  \ol{f}_{\ell}\rangle 
\label{resolvedweak2}
\eea
The argument is completed by taking the limit $\ell\to 0$, using the fact that mollification can be removed strongly in $L^p$. Taking the limit of equation \eqref{resolvedweak2} thus shows that $u$ is a weak Euler solution.
}

The energy balance \eqref{limitBalance} {is proved by a very similar argument.}
Smearing the resolved energy balance \eqref{resolvedEnergyBal} {established in Lemma \ref{lemma}} with an arbitrary test function $\varphi\in D([0,T]\times \mathbb{T}^d)$ yields:
\bea\nonumber
-\langle\partial_t\varphi, \frac{1}{2} |\overline{(\bu^\nu)}_{\ell}|^2 \rangle  &=& \langle\nabla\varphi,   J_{\ell}^0[u^\nu]\rangle- \langle\Delta\varphi,   \frac{\nu}{2} |\overline{(\bu^\nu)}_{\ell}|^2\rangle \\
&& +\langle\varphi,-\Pi_{\ell}[\bu^\nu]-\nu |\nabla \ol{(\bu^\nu)}_{\ell}|^2+\ol{(\bu^\nu)}_{\ell}\cdot \ol{(f^\nu)}_{\ell}\rangle
\label{resolvedEnergyBal2}
\eea
where $J_{\ell}^0[u^\nu]$ is the inviscid part of the energy current $J_{\ell}[u^\nu]$ defined in (\ref{J-def}), or 
$$
J_{\ell}^0[u^\nu]:= \left(\frac{1}{2} |\overline{(\bu^\nu)}_{\ell}|^2+ \overline{(p^\nu)}_{\ell} \right)\overline{(\bu^\nu)}_{\ell}  + \overline{(\bu^\nu)}_{\ell}\cdot  
{\tau}_{\ell} (\bu^\nu,\bu^\nu). 
$$
First note that the terms involving viscosity as a pre-factor vanish pointwise in space-time:
\bea
\nu |\nabla \ol{(\bu^\nu)}_{\ell}(x,t)|^2\leq \frac{\nu}{\ell^2} \|(\nabla G)_\ell\|_2^2  \|u^\nu\|_{L^\infty([0,T];L^2(\mathbb{T}^d))}^2&\stackrel[\nu\to 0]{}{\longrightarrow} & 0, \\
\quad  \quad \frac{\nu}{2} |\overline{(\bu^\nu)}_{\ell}(x,t)|^2 \leq \frac{\nu}{2} \|G_\ell\|_2^2 \|u^\nu\|_{L^\infty([0,T];L^2(\mathbb{T}^d))}^2  &\stackrel[\nu\to 0]{}{\longrightarrow} & 0.
\eea
The above bounds follow from Young's inequality for convolutions.  Thus, the contribution from these terms will vanish 
in (\ref{resolvedEnergyBal2}) for $\nu\to 0$ and we must now argue that the remaining terms converge.  

In addition to the pointwise-in-$x$ convergence of the mollified quantities discussed above, we have similarly that $\tau_\ell(u^\nu,u^\nu)\to \tau_\ell(u,u)$ pointwise in space for a.e. $t$.  Moreover, by general theory of Calder\'on-Zygmund operators, the map $u^\nu \otimes u^\nu\to p^\nu$ is strongly continuous in 
$L^p(\mathbb{T}^d)$ for $p\in (1,\infty)$ (see e.g. \cite{DR00}).  
In particular, for $p=3/2,$ the assumption on weak convergence of $u^\nu \otimes u^\nu$ in \eqref{Convassumptions} implies that $p^\nu\rightharpoonup p$ 
weakly in $L^{3/2}(\mathbb{T}^d)$ a.e. $t$. Thus, all of the following terms converge pointwise in space, for a.e. $t$:
\be
\frac{1}{2} |\overline{(\bu^\nu)}_{\ell}|^2\to \frac{1}{2} |\overline{\bu}_{\ell}|^2, \quad J_{\ell}^0[u^\nu]\to  J_{\ell}^0[u], \quad \Pi_{\ell}[\bu^\nu]\to \Pi_{\ell}[\bu] ,\quad  \ol{(\bu^\nu)}_{\ell}\cdot \ol{(f^\nu)}_{\ell}\to \ol{\bu}_{\ell}\cdot \ol{f}_{\ell}
\ee
since they are made up of products of objects which converge pointwise.  

Once again, convergence in the sense of distributions follows if integrable bounds can be obtained that allow us to infer 
limits of the smeared terms in (\ref{resolvedEnergyBal2}) by dominated convergence. 
Recall by our assumptions that $u^\nu\in  L^3([0,T]; L^3(\mathbb{T}^d))$ and $p^\nu\in  L^{3/2}([0,T]; L^{3/2}(\mathbb{T}^d))$ 
not only for each $\nu>0$ (as holds for every Leray solution) but also with norms bounded uniformly in $\nu>0$.   
Using Young's inequality for convolutions and H\"{o}lder's inequality, we have pointwise in space-time:
\be
|\nabla \ol{(u^\nu)}_\ell(x,t)| \leq \frac{1}{\ell} \|(\nabla G)_\ell\|_{3/2} \|u^\nu(\cdot,t)\|_{3} \lesssim \|u^\nu(\cdot,t)\|_{3}\ee
\be 
|\tau_{\ell}(u^\nu, u^\nu)(x,t)|\leq \|G_\ell\|_3 \|(u^\nu\otimes u^\nu)(\cdot,t)\|_{3/2}  
+ \|G_\ell\|_{3/2}^2 \|u^\nu(\cdot,t)\|_{3}^2 \lesssim  \|u^\nu(\cdot,t)\|_{3}^2. 
\ee
Likewise we have for the terms appearing in (\ref{resolvedEnergyBal2}) that 
\bea\nonumber
\frac{1}{2} |\overline{\bu^\nu(x,t)}_{\ell}|^2 \!\! \! &\lesssim &\!\! \!  \|u^\nu(\cdot,t)\|_{2}^2, 
\qquad |J_{\ell}^0[u^\nu](x,t)]| \,\lesssim \,  \|u^\nu(\cdot,t)\|_{3}^3 + \|p^\nu(\cdot,t)\|_{3/2}\|u^\nu(\cdot,t)\|_{3},\\
  |\Pi_{\ell}[\bu^\nu](x,t)]| \!\! \!  &\lesssim & \!\! \! \|u^\nu(\cdot,t)\|_{3}^3,
  \quad\quad\  |\ol{(\bu^\nu)}_\ell(x,t)\cdot \ol{(f^\nu)}_\ell(x,t)|\lesssim \|u^\nu(\cdot,t)\|_{2}\|f^\nu(\cdot,t)\|_{2},
\eea
Since all of the latter upper bounds are in $L^1(\mathbb{T}^d\times [0,T])$ uniformly in $\nu>0$ under 
our assumptions, we can apply dominated convergence theorem to obtain
from (\ref{resolvedEnergyBal2})  for fixed $\ell> 0$ that in the limit $\nu\to 0$
\bea\label{resolvedEnergyBalLim}
&&\partial_t\left( \frac{1}{2} |\overline{\bu}_{\ell}|^2 \right) +\nabla \cdot J_{\ell}^0[u] =  -\Pi_{\ell}[\bu] + \ol{\bu}_{\ell}\cdot \ol{f}_{\ell}, 
\eea
in the sense of space-time distributions. We note in particular that 
\be\label{efluxlin} \stackrel[\nu\to 0]{}{\Dlim}\Pi_{\ell}[\bu^\nu]= \Pi_{\ell}[\bu]:= -\nabla\ol{(u)}_\ell:\tau_\ell(u,u). \ee 

The argument is completed by taking the limit $\ell\to 0$ of \eqref{resolvedEnergyBalLim} and showing 
that \eqref{limitBalance} holds distributionally. This fact is proved in \cite{DR00} using a somewhat different regularization. 
For all terms except $\Pi_\ell[u]$, distributional convergence follows directly from the strong continuity of shifts in $L^p$ 
since $u\in  L^3([0,T]; L^3(\mathbb{T}^d))$ and $p\in  L^{3/2}([0,T]; L^{3/2}(\mathbb{T}^d))$.  
{In particular, the term $\overline{\bu}_{\ell}\cdot  {\tau}_{\ell} (\bu,\bu)$
in $J^0_\ell[u]$ vanishes by the commutator identity for ${\tau}_{\ell} (\bu,\bu)$ in \cite{CET94}.}
Convergence of the flux $\Pi_\ell[u]$ is then inferred from the distributional equality:
\be -\stackrel[\ell\to 0]{}{\Dlim}\Pi_\ell[u]= \partial_t\left(\frac{1}{2}|u|^2\right)+\nabla\cdot\left[\left(\frac{1}{2}|u|^2+p\right)u\right]- u \cdot f :=D[u]. 
\ee 

{Under condition (i),} the limiting Euler solutions $u\in L^3(\mathbb{T}^d\times [0,T])$ 
have {additional} space-regularity. The uniform boundedness condition 
in (i) of Theorem \ref{theorem2}, $\sup_{\nu>0}\|u^\nu\|_{L^{3}([0,T];B_3^{\sigma,\infty}({\mathbb T}^d))}<\infty,$ implies that 
\be  \|u^{\nu}\|_{L^{3}(\mathbb{T}^d\times [0,T])}< C', \quad 
\|u^{\nu}(\cdot+r,\cdot)-u^{\nu}\|_{L^{3}(\mathbb{T}^d\times [0,T])}< C|r|^\sigma. \label{incBnd}
\ee 
with constants $C,$ $C'$ independent of viscosity.
The inequalities (\ref{incBnd}) are preserved under strong limits  in $L^{3}(\mathbb{T}^d\times [0,T])$ 
and thus the limiting Euler solutions $u$ under condition (i) satisfy them as well. 
{This yields immediately $u\in L^3([0,T],B_3^{\sigma', c_0}(\mathbb{T}^d)$ for any $\sigma'<\sigma,$
with definitions as in Remark \ref{rem:Bc}. Finally, $D[u]=0$ for $\sigma\in (1/3,1]$ follows from the additional 
space-regularity by the results of \cite{CCFS08}.}   
\end{proof} 

\begin{rem}\label{limitBesov} 
{\rm Although not stated in the theorem, the inequalities \eqref{incBnd} are again preserved in the limit if we add to condition (ii)
the assumption that (\ref{incBnd}) holds with constants $C,$ $C'$ independent of viscosity. 
Weak lower-semicontinuity of the $L^3(\mathbb{T}^d)$-norm and of 
\be \|u^\nu(\cdot+r,t)-u^\nu(\cdot,t)\|_3=\sup_{\|w\|_{3/2}=1}|\langle w(\cdot-r)-w,u^\nu(\cdot,t)\rangle|\ee 
and Fatou's lemma in time, together with the assumption (\ref{incBnd}), guarantees that limiting Euler solutions 
$u$ under this strengthened condition (ii) satisfy the same bound. This is analogous to Remark 3.5 in \cite{CV17}.
} \end{rem}

\subsection*{Acknowledgments}  We are grateful to Philip Isett and Lazslo Sz\'ekelyhidi Jr. for useful conversations 
during the IPAM ``Turbulent Dissipation, Mixing and Predictability" workshop of January 2017. We also thank
Susan Friedlander and James Glimm for informing us of their earlier related results.  
Research of TD is supported by NSF-DMS grant 1703997. The paper was completed during G.E.'s 
participation in the program  ``Geometrical and Statistical Fluid Dynamics,'' October 2017, 
at the Simons Center for Geometry \& Physics, whose funding for his stay is happily acknowledged.

\end{document}